\documentclass{amsart}
\usepackage{amscd}
\usepackage{enumerate,amssymb,amsmath,graphics,epsfig}
\usepackage[colorlinks]{hyperref}
\usepackage{color}
\usepackage{tcolorbox}
\usepackage{mathrsfs}
\usepackage{epstopdf}
\usepackage{booktabs}
\usepackage{subfigure}
\usepackage{marginnote}
\usepackage{rotating}
\usepackage{todonotes}

\theoremstyle{plain}
\newtheorem{thm}{Theorem}
\newtheorem{proposition}[thm]{Proposition}
\newtheorem{lemma}[thm]{Lemma}

\newtheorem{problem}{Problem}
\newtheorem{ass}{Assumption}
\theoremstyle{remark}

\newtheorem{remark}{Remark}

\newcommand{\m}[1]{\mathsf{#1}}
\newcommand{\RE}{\mathbb{R}}
\newcommand\Huo{H^1_0(\Omega)}
\newcommand\Ld{L^2(\Omega)}

\newcommand\Kb{\mathbb{K}_{b,h}}
\newcommand\No{\Kb^\perp}
\newcommand\T{\mathcal{T}}

\newcommand\Pinabla{{\Pi_k^\nabla}}
\newcommand\Pio{\Pi^0_k}
\newcommand\Pinablau{{\Pi_1^\nabla}}
\newcommand\Pinablad{{\Pi_2^\nabla}}
\newcommand\Piou{\Pi^0_1}
\newcommand\Piod{\Pi^0_2}
\newcommand\Th{T_h}
\newcommand\Tht{\widetilde{T}_h}
\renewcommand\P{\mathbb{P}}
\newcommand\calM{\mathcal{M}}
\newcommand\Vh{V_h}
\newcommand\VhE{V_h(E)}
\newcommand\VhkE{V_h^k(E)}

\newcommand\Vhk{V_h^k}

\newcommand\aE{a^E}
\newcommand\bE{b^E}
\newcommand\aEh{a^E_h}
\newcommand\bEh{b^E_h}
\newcommand\SE{S^E}
\newcommand\ah{a_h}
\newcommand\bh{b_h}

\newcommand\vh{v_h}
\newcommand\uh{u_h}
\newcommand\wh{w_h}
\newcommand\fh{f_h}
\newcommand\new{\begin{color}{blue}}
\newcommand\wen{\end{color}}

\begin{document}

\title[]
{Virtual element approximation of eigenvalue problems: is the stabilization of the right hand side necessary?}
\author{Daniele Boffi}
\address{King Abdullah University of Science and Technology (KAUST), Saudi
Arabia, Dipartimento di Matematica “F. Casorati”, Università di Pavia, Italy,
IMATI-CNR ``Enrico Magenes'', Pavia, Italy}
\email{daniele.boffi@kaust.edu.sa}
\urladdr{http://cemse.kaust.edu.sa/people/person/daniele-boffi}
\author{Francesca Gardini}
\address{Dipartimento di Matematica ``F. Casorati'', Universit\`a di Pavia, Italy}
\email{francesca.gardini@unipv.it}
\urladdr{http://www-dimat.unipv.it/gardini/}
\author{Lucia Gastaldi}
\address{DICATAM, Universit\`a di Brescia, Italy, IMATI-CNR ``Enrico Magenes'', Pavia, Italy}
\email{lucia.gastaldi@unibs.it}
\urladdr{http://lucia-gastaldi.unibs.it}
\subjclass{65N30, 65N25}
\keywords{partial differential equations, eigenvalue problem, parameter
dependent matrices, virtual element method, polygonal meshes}

\begin{abstract}
The VEM approximation of eigenvalue problems usually involves the appropriate
tuning of stabilization parameters, unless self-stabilizing or
stabilization-free VEM are used. In this paper we prove that for elliptic
self-adjoint eigenvalue problems the stabilization of the mass matrix is not
necessary when lower order standard VEM spaces are adopted. Numerical evidence
shows that also for higher order schemes the same result is true on various
mesh sequences.
\end{abstract}
\maketitle

\section{Introduction}
\label{se:intro}

This paper deals with the virtual element approximation of eigenvalue
problems associated with partial differential equations. A typical variational
formulation for an eigenvalue problem seeks eigenvalues $\lambda$
and non vanishing eigenfunctions $u$ in a suitable functional space $V$ such
that
\[
a(u,v)=\lambda b(u,v)\qquad\forall v\in V.
\]
In this exploratory work we consider an eigenvalue problem associated with a
self-adjoint elliptic operator. At the continuous level in most cases the
bilinear forms $a(\cdot,\cdot)$ and $b(\cdot,\cdot)$ are symmetric and
coercive. Standard conforming Galerkin methods lead to a matrix problem of the
form
\[
\m{A}\m{x}=\lambda\m{B}\m{x}
\]
with $\m{A}$ and $\m{B}$ symmetric and positive definite.

Several discretization schemes require suitable stabilizations of the bilinear
forms $a(\cdot,\cdot)$ and $b(\cdot,\cdot)$. The virtual element method (VEM)
is one of those and we refer to~\cite{auto-stab} for a throughout discussion
of the risks originating from the modification of the bilinear forms. When the
matrices $\m{A}$ and $\m{B}$ contain parametric terms, spurious eigensolutions
are added to the spectrum and may pollute the results if the stabilization
parameters are not chosen appropriately.
The use of standard VEM spaces seems to require such stabilization: the
bilinear forms $a(\cdot,\cdot)$ and $b(\cdot,\cdot)$ are usually evaluated
using projection operators on polynomial spaces, ensuring consistency but
leading to unstable methods where the involved matrices $\m{A}$ and $\m{B}$
could even become singular if not properly stabilized.
A look at the relevant analysis presented in~\cite{auto-stab} shows that a
naive idea that the stabilization parameters should be \emph{large enough} is
prone to subtle drawbacks, more evident for $\m{B}$ than for $\m{A}$.
Actually, increasing the stabilization parameter for the bilinear form
$a(\cdot,\cdot)$ has the effect of shifting the spurious eigenvalues to the
higher part of the spectrum, while increasing the stabilization parameter for
the bilinear form $b(\cdot,\cdot)$ may move the spurious eigenvalues towards
the lower part of the spectrum, thus polluting the typical window of interest
when eigenvalues close to the fundamental mode are sought.

The first papers dealing with VEM approximation of eigenvalue
problems~\cite{Fra-VEM,fra_nonconf,mora_steklov} show the convergence of the
numerical scheme, as the meshsize $h$ tends to zero, when the stabilizing
parameters are fixed. As explained above, it became apparent
soon~\cite{auto-stab,vem-springer} that asymptotic a priori estimates with a
prededermined choice of the parameters are not enough to guarantee that the
method is effective in practice. Already in~\cite{Fra-VEM} it can be seen that
spurious eigenvalues might appear with a wrong choice of the parameters.

In this context, several research directions are emerging, aiming at
identifying VEM schemes that are not affected by stabilization issues when
applied to eigenvalue problems. For instance, stabilization free or
self-stabilizing methods have been intensively studied during the last years
for the approximation of the source
problem~\cite{berrone2023stabilizationfree,Berronefirst,self-stabilized}. This
could be a way of avoiding the tuning of parameters, even if questions remain
open about the optimal choice of the projection degree and the increased
polynomial degree of the projections may lead to a higher assembly time and to
a worse condition number of the involved matrices~\cite{Paola}. Research in
this direction are presented, for instance,
in~\cite{Mengetal,ChenSukumar,MarconMora,Mengetal2}. In other cases, it has
been shown that schemes can work even if the right hand side matrix $\m{B}$ is
not stabilized~\cite{Linda}.

In any case, it is timely and interesting to discuss whether the stabilization
is needed when standard VEM spaces are used for the approximation of
eigenvalue problems. This is the aim of this paper and we focus on the
stabilization of the right hand side matrix $\m{B}$ that, as explained above,
is more critical than the stabilization of $\m{A}$.

In Section~\ref{se:setting} we describe the eigenvalue problem we are dealing
with, and we introduce our notation. We discuss the Descloux--Nassif--Rappaz
theory for the convergence analysis of non compact operators~\cite{DNR1}. It
is not so uncommon to use this theory when non-conforming approximations of
compact operators are present. Moreover, the standard
analysis~\cite{Boffi-acta} cannot always be used in case of VEMs due to the
difficulty of defining the discrete solution operator for all functions in
$L^2(\Omega)$.
The developed analysis holds in general and is
applied to VEM discretizations in Sections~\ref{se:vem} and~\ref{se:practice}.
Section~\ref{se:analysis} is the core of our contribution where we show that
for lower order degrees ($k=1,2$) the discrete eigensolutions converge even
when the right hand side matrix $\m{B}$ is not stabilized. Finally, a series
of numerical tests is reported in Section~\ref{se:num} for several choices of
mesh sequences. It can be appreciated that the method can converge also when
the matrix $\m{B}$ has a non trivial kernel. Moreover, it can be seen that the
convergence holds also beyond the theoretical results when higher order
schemes are used.
\section{Abstract setting}
\label{se:setting}
Let us consider two Hilbert spaces $V$ and $H$ with $V\subset H$, with dense and continuous embedding, endowed with norms $\|\cdot\|_V$ and $\|\cdot\|_H$, respectively. We introduce two symmetric and continuous bilinear forms $a:V\times V\to\RE$ and $b:H\times H\to\RE$ satisfying the following assumptions:
\begin{itemize}
\item 
there exists a positive constant $\alpha$ such that for all $v\in V$
\begin{equation}
\label{eq:coercivity}
a(v,v)\ge\alpha \|v\|_V^2;
\end{equation} 
\item for all $u\in H$ with $u\neq 0$
\begin{equation}
\label{eq:positivity}
b(u,u)>0.
\end{equation}
\end{itemize}
We consider the following eigenvalue problem. 
\begin{problem}
\label{pb:eig-pb}
Find $\lambda\in\RE$ such that there exists $u\in V$ with $u\neq 0$ satisfying
$$
a(u,v)=\lambda b(u,v)\quad\forall v\in V.
$$
\end{problem}
The associated source problem reads as follows.
\begin{problem}
\label{pb:source-pb}
Given $f\in H$ find $u\in V$ such that
$$
a(u,v)= b(f,v)\quad\forall v\in V.
$$
\end{problem}
Thanks to the coercivity and continuity assumptions, the Lax--Milgram lemma implies that Problem~\ref{pb:source-pb} is well-posed in the sense that there exists a unique solution $u\in V$ such that 
$$ 
\|u\|_V\le C\|f\|_H.
$$

We denote by $V_0$ be the space containing the solutions of Problem~\ref{pb:source-pb}
for all $f\in H$, so that we have also the regularity estimate
\begin{equation}
\label{eq:V0}
\|u\|_{V_0}\le C\|f\|_H.
\end{equation}

The analysis of variationally posed eigenvalue problems typically relies on
the definition of the solution operator $T:V\to V$ such that, for all $f\in
V$, $Tf=u$ with $u$ solution of Problem~\ref{pb:source-pb}, that is 
$$
a(Tf,v)=b(f,v)\quad\forall v\in V.
$$
We observe that $T$ is self-adjoint. We have that $\mu\in\RE$ is an eigenvalue of $T$ if there exists $u\in V$ with $u\neq 0$ such that $Tu=\mu u$. 
We assume that $T:V\to V$ is a compact operator. Hence, the spectrum of $T$ contains $0$ and 
a countable set of strictly positive eigenvalues $\{\mu_n\}_{n\in\mathbb{N}}$
(counted with their multiplicities) having only $0$ as possible accumulation
point. If the range of $T$ is not finite dimensional then the eigenvalues can
be sorted as a decreasing sequence converging to $0$ as $n$ goes to infinity.
It is well-known that these positive eigenvalues are the reciprocal of the
eigenvalues of Problem~\ref{pb:eig-pb}, that is $\mu_n=1/\lambda_n$ and 
$$
0<\lambda_1\le \lambda_2\le\dots \le\lambda_n\le\cdots, 
$$
and that the eigenspaces are the same. 
Moreover, the corresponding eigenfunctions $u_n$ are orthogonal with respect
to both the forms $a$ and $b$ (this is automatic for simple eigenvalues, while
it can be enforced in case of multiple ones), and we set that $b(u_n,u_n)=1$.

For the discretization of Problem~\ref{pb:eig-pb}, let $V_h$ be a finite dimensional subspace of $V$, and let us consider 
discrete bilinear forms $a_h:V_h\times V_h\to\RE$ and $b_h:V_h\times
V_h\to\RE$. We assume that $a_h$ and $b_h$ are continuous with respect to the
norms $\|\cdot\|_V$ and $\|\cdot\|_H$, respectively, and that there exists a positive constant $\underline{\alpha}$ independent of $h$ such that 
\begin{equation}
\label{eq:disc-coercivity}
a_h(\vh,\vh)\ge\underline{\alpha} \|\vh\|_V^2\quad\forall \vh\in V_h. 
\end{equation} 
With the above definitions, the discrete counterpart of Problem~\ref{pb:eig-pb} reads as follows. 
\begin{problem}
\label{pb:disc-eig-pb}
Find $\lambda_h\in\RE$ such that there exists $\uh\in V_h$ with $\uh\neq 0$ satisfying
$$
a_h(\uh,\vh)=\lambda_h b_h(\uh,\vh)\quad\forall \vh\in V_h.
$$
\end{problem}
Contrary to the continuous case, we do not require that $b_h$ is strictly
positive (see assumption in~\ref{eq:positivity}) but only non negative.
Therefore,
there might exist elements $\wh$ in $V_h$ such that $b_h(\wh,\vh)=0\quad\forall
\vh\in V_h$. We call the set of such $\wh$'s the kernel of $b_h$ and denote it as follows:
\begin{equation}
\label{eq:kernel-bh}
\Kb=\{\wh\in V_h: b_h(\wh,\vh)=0\quad\forall \vh\in V_h\}.
\end{equation}
The generalized algebraic eigenvalue system associated to Problem~\ref{pb:disc-eig-pb} is 
\begin{equation}
\label{eq:algebraic}
\m{A}\m{x}=\m{\lambda}\m{B}\m{x}, 
\end{equation}
with $\m{A}$, $\m{B}$ matrices of dimension $N_h:=\dim(V_h)$, and $\m{x}\in\RE^{N_h}$. The matrix $\m{B}$ may be not full rank, 
due to the fact that $b_h$ can have a non trivial kernel $\Kb$. In such case
$\m{\lambda}=\infty$ is an eigenvalue of~\eqref{eq:algebraic} 
with multiplicity equal to the dimension of $\Kb$. In our analysis, we are
going to avoid dealing with such infinite eigenvalues by restricting our
problem to a suitable subspace of $V_h$ which does not contain elements of
$\Kb$.

Similarly to the continuous case, we consider the associated discrete source problem and discrete solution operator. 
\begin{problem}
\label{pb:disc-source-pb}
Given $f_h\in V_h$, find $\uh\in V_h$ such that
$$
a_h(\uh,\vh)= b_h(f_h,\vh)\quad\forall \vh\in V_h.
$$
\end{problem}
Thanks to the coercivity assumption~\eqref{eq:disc-coercivity}, there exists a
unique solution to Problem~\ref{pb:disc-source-pb}. We observe that if
$f_h\in\Kb$, then the right hand side $b_h(f_h,\vh)$ vanishes for all $\vh\in V_h$, so
that the solution $\uh$ is zero. 

The discrete solution operator $\Tht: V_h\to V_h$, is then defined for all
$f_h\in V_h$ by $\Tht f_h=\uh$, with $\uh$ the unique solution to Problem~\ref{pb:disc-source-pb}, that is 
\begin{equation}
\label{eq:Tht}
a_h(\Tht f_h,\vh)= b_h(f_h,\vh)\quad\forall \vh\in V_h.
\end{equation}
The eigenvalues $\widetilde\mu_h\in\RE$ of $\Tht$ satisfy that there exists
$\widetilde{u}_h\in V_h$, with $\widetilde{u}_h\neq 0$, such that
$\Tht\widetilde{u}_h=\widetilde\mu_h\widetilde{u}_h$. Since $V_h$ is of
dimension $N_h$, we have exactly $N_h$ eigenvalues of 
$\Tht$. We can have that $\Kb\neq\{0\}$, so that $\Tht$ admits the null eigenvalue with multiplicity equal to $\dim(\Kb)$ and $\Kb$ is the associated eigenspace. If $\widetilde\mu_h\neq 0$, then $\widetilde{u}_h$ does not belong to $\Kb$ and 
$\left(\lambda_h,\widetilde{u}_h\right)$ with $\lambda_h=\dfrac{1}{\widetilde\mu_h}$ is an eigensolution of Problem~\ref{pb:disc-eig-pb}, 
that is 
\begin{equation}
\label{eq:tilde-eigmodes}
a_h(\widetilde{u}_h,\vh)=\lambda_h b_h(\widetilde{u}_h,\vh)\quad\forall \vh\in V_h.
\end{equation}
Taking $\vh\in\Kb$ in~\eqref{eq:tilde-eigmodes}, we obtain that
$a_h(\widetilde{u}_h,\vh)=0$, therefore $\widetilde{u}_h$ is orthogonal 
to $\Kb$ with respect to the scalar product induced by $a_h$. We denote by $\No$ such space, with the following definition
\begin{equation}
\label{eq:orth-kb}
\No=\{\wh\in V_h: a_h(\wh,\vh)=0\quad\forall \vh\in\Kb\}.
\end{equation}
We characterize the action of $\Tht$ in the following lemma. 
\begin{lemma}
\label{le:chenonhaunriferimento}
The operator  $\Tht$ maps the space $\No$ into itself. 
\end{lemma}
\begin{proof}
Let us consider $f_h\in\No$, then by definition of $\Tht$, see~\eqref{eq:Tht}, we
have that for all $\vh\in\Kb$
$$
a_h(\Tht f_h,\vh)= b_h(f_h,\vh)=0, 
$$
hence due to~\eqref{eq:orth-kb} $\Tht f_h\in\No$.
\end{proof}
The restriction of $\Tht$ to the space $\No$ is denoted by $\Th:\No\to\No$ and is defined as:
\begin{equation}
\label{eq:Th}
\forall f_h\in\No\text{ find }\Th f_h\in\Vh:\quad a_h(\Th f_h,\vh)=b_h(f_h,\vh)\quad\forall \vh\in\No.
\end{equation}
The eigenmodes of the operator $\Th$ are such that $\Th \uh=\mu_h \uh$.  Hence
$\Th$ admits exactly $N_h-\dim(\Kb)$ strictly positive eigenvalues $\mu_h$.
The pair $(\lambda_h, \uh)$ with $\lambda_h=\dfrac{1}{\mu_h}$ is an eigenmode of the following variational 
discrete eigenvalue problem.
\begin{problem}
\label{pb:perp-eig-pb}
Find $\lambda_h\in\RE$ such that there exists $\uh\in\No$ with $\uh\neq 0$ satisfying
$$
a_h(\uh,\vh)=\lambda_h b_h(\uh,\vh)\quad\forall \vh\in\No.
$$
\end{problem}
In order to discuss the convergence of the discrete eigenmodes of
Problem~\ref{pb:perp-eig-pb} to those of Problem~\ref{pb:eig-pb}, 
we use the abstract theory of~\cite{DNR1}. In particular, we consider
the following two properties: 
\begin{description}
\item[P1] $\displaystyle\sup_{\fh\in\No}\dfrac{\|(T-\Th)\fh\|_{V}}{\|\fh\|_V}$
tends to $0$ as $h$ goes to $0$;
\item[P2] $\displaystyle\inf_{\vh\in\No} \| u-\vh\|_V$ tends to $0$ as $h$ goes
to $0$ for all $u\in V_0$, where $V_0$ is the space containing the solutions of
Problem~\ref{pb:source-pb}.
\end{description}

The following theorem was proved in~\cite{DNR1}:
\begin{thm}
\label{th:DNRI}
Let us assume that properties\/ $\mathbf{P1}$ and\/ $\mathbf{P2}$ are satisfied, then the
eigenvalues of Problem~\ref{pb:perp-eig-pb} converge to those of
Problem~\ref{pb:eig-pb} in the sense that for any $\lambda$ of multiplicity
$m$ solution of Problem~\ref{pb:eig-pb} there exist exactly $m$ discrete
eigenvalues solution of Problem~\ref{pb:perp-eig-pb} converging to it as $h\to 0$.
\end{thm}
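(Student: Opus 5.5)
The plan is to recast the setting so that the abstract spectral approximation theory of Descloux--Nassif--Rappaz applies directly. There, one is given a compact operator $T:V\to V$ and a family of finite-dimensional subspaces $\No\subset V$ with operators $\Th:\No\to\No$. The conclusion (spectral convergence without spurious modes, with multiplicity preserved) follows from two hypotheses: uniform convergence of $\Th$ to $T$ on the discrete spaces, which is \textbf{P1}, and approximability of the range of $T$ by the discrete spaces, which is \textbf{P2}. In our setting the range of $T$ is contained in $V_0$. So the first step is a bookkeeping check. We must confirm that $\Th$ is well defined on $\No$, which is Lemma~\ref{le:chenonhaunriferimento}, and that its nonzero eigenvalues are exactly the reciprocals of the eigenvalues of Problem~\ref{pb:perp-eig-pb}, with the same eigenvectors. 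This is immediate from~\eqref{eq:Th}, since $a_h$ is coercive on $\No$. The continuous correspondence $\mu_n=1/\lambda_n$ with equal eigenspaces was recorded above.

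The second step is the core DNR argument, which I would reproduce in outline to make the hypotheses transparent. Fix an eigenvalue $\mu=1/\lambda$ of $T$ of multiplicity $m$ and a circle $\Gamma$ around $\mu$ that excludes the rest of $\sigma(T)$ and $0$. I would then show that for $h$ small, $\Gamma$ lies in the resolvent set of $\Th$. Suppose $z\in\Gamma$ and $\vh\in\No$ with $\|\vh\|_V=1$ satisfy $(z-\Th)\vh\to0$. Then \textbf{P1} gives $(z-T)\vh\to0$, which contradicts the boundedness of $(z-T)^{-1}$ on $V$.

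This yields a uniform resolvent bound
\[
\|(z-\Th)^{-1}\|_{\mathcal L(\No)}\le C \quad\text{for } z\in\Gamma,
\]
and with it the discrete spectral projection $E_h=\frac1{2\pi i}\int_\Gamma (z-\Th)^{-1}dz$ on $\No$. Let $E$ be the continuous projection onto the $m$-dimensional eigenspace.

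Next, I would show that $\|(E-E_h)\vh\|_V\to0$ uniformly for $\vh\in\No$. This uses the resolvent identity
\[
(z-T)^{-1}-(z-\Th)^{-1}=(z-T)^{-1}(T-\Th)(z-\Th)^{-1}
\]
on $\No$, together with \textbf{P1} and the uniform bound above.

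The third step counts dimensions to obtain exactly $m$ discrete eigenvalues inside $\Gamma$. The inequality $\dim E_h\No\le m$ comes from the uniform convergence: if $\dim E_h\No>m$, there would be a unit vector in $E_h\No$ that is $V$-orthogonal to $E V$, and this contradicts $E_h\vh\approx E\vh$. The inequality $\dim E_h\No\ge m$ needs \textbf{P2}. The eigenfunctions lie in the range of $T$, hence in $V_0$. For each $u\in EV$, \textbf{P2} provides $\vh\in\No$ close to $u$, and then
\[
E_h\vh\approx E\vh\approx Eu=u .
\]
So $E_h\No$ approximates the whole $m$-dimensional space $EV$, and for $h$ small it must have dimension at least $m$. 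Here I would use that $EV$ is finite dimensional, so that approximating a basis is enough. Because $\Gamma$ can be taken arbitrarily small, the $m$ discrete eigenvalues $\mu_h$ inside it converge to $\mu$, and so $\lambda_h=1/\mu_h\to\lambda$.

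The step I expect to be the main obstacle is making the second and third steps rigorous when $\Th$ acts only on $\No$ and not on all of $V$. The subspaces $\No$ vary with $h$ and are not images of a fixed projection, so operator-norm convergence of $\Th$ to $T$ in $\mathcal L(V)$ is not available. All estimates therefore have to be written in the mixed form used by DNR, with restrictions to $\No$ and the approximability assumption \textbf{P2}. To handle this I would extend $\Th$ by composing with a bounded projection onto $\No$, such as the $a_h$-orthogonal projection restricted suitably. Failing that, I would simply invoke the original DNR statement, which is formulated exactly for this non-conforming, subspace-dependent situation.
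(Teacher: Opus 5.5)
Your proposal is correct and follows the same route as the paper. The paper does not reprove the result; it simply invokes the theorem of Descloux--Nassif--Rappaz, and your outline is exactly their argument: a uniform resolvent bound on $\Gamma$ from \textbf{P1}, convergence of the Riesz projections restricted to $\No$ via the resolvent identity, and the two-sided dimension count, where \textbf{P2} is needed only for the eigenfunctions, which lie in $V_0$. Your second and third steps are already rigorous in this mixed form, so the proposed extension of $\Th$ by a projection is unnecessary; in any case the $a_h$-orthogonal projection is not defined on all of $V$.
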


\begin{remark}
An analogous theorem holds also for the convergence of the eigenspaces. As
usual, in case of a multiple eigenvalue $\lambda$, the discrete eigenspaces
associated with all the eigenvalues converging to $\lambda$ should be
considered.
\end{remark}

\begin{remark}
In this paper we address the issue of the convergence (and absence of spurious
modes) without investigating the rate of convergence. Actually, once the
convergence is assured, estimating the rate can be done, for instance, using
the tools of~\cite{DNR2} and is usually an easier task.
\end{remark}

\section{Property P1 for general VEM approximation of elliptic eigenvalue
problem}
\label{se:vem}
As an example for the situation illustrated in the above section, we consider
the Virtual Element Method (VEM) for approximating an elliptic
partial differential equation. In
this case the bilinear forms $a_h$ and $b_h$ have to be chosen carefully and
stabilization techniques are usually introduced in order that the
matrices in~\eqref{eq:algebraic} are positive definite.
However, the stabilization procedure introduces parameters in the discrete
formulation of the problem and optimal choices of them could be a difficult
task, as it has been pointed out in~\cite{auto-stab}. Actually the main
troubles arise in connection with the choice of the stabilization parameter of
$b_h$, as it can be seen in~\cite[Fig.~7]{Fra-VEM}. Therefore here we
discuss the VEM discretization of the Poisson equation avoiding the
stabilization of $b_h$.

A typical virtual element method relies on a polygonal decomposition of the
domain. 
Let $\Omega\subset\RE^2$ be an open connected polygonal domain and let us
introduce a family of decomposition $\T_h$ of $\Omega$ into non overlapping
polygons $E$ of an arbitrary number of edges satisfying the following standard
assumptions. We denote by $h_e$ the length of the edge $e$ for
$e\subset\partial E$, by $h_E$ the diameter of $E$ and by $h=\max\{h_E,\
E\in\T_h\}$ the mesh size.
We assume that there exists a constant $\delta_1>0$ independent of $h$ such that
$h\le\delta_1 h_e$ for all $e\subset\partial E$ with $E\in\T_h$.
Each $E$ is star shaped with respect to a ball with radius $\rho_E$, and
there exists $\delta_2>0$ such that $\rho_E\ge\delta_2 h_E$. The notation
$\P_k(E)$ stands for the space of polynomials of degree at most $k$ on the
element $E$, while $\P_k(\T_h)$ stands for the space of piecewise polynomials
of degree at most $k$ on the mesh $\T_h$.

In order to deal with the virtual element discretization we need that the
Hilbert spaces $H$ and $V$ contain functions defined on $\Omega$ and enjoy
some local properties as follows. For
any subset $D$ of $\Omega$, both $H$ and $V$ can be restricted to $D$.
We denote such restriction $H(D)$ and $V(D)$, and the corresponding 
norms $\|\cdot\|_{H(D)}$ and $\|\cdot\|_{V(D)}$, respectively. Similarly, the
bilinear forms $a$ and $b$ can be restricted to any subdomain $D\subset\Omega$.

Following~\cite{basic}, we introduce the basic properties of the virtual
element discretization for a general elliptic problem (see
Problem~\ref{pb:source-pb}).

We consider a finite dimensional subspace $V_h\subset V$ constructed in such a
way that its restriction $\VhE$ to each element $E\in\T_h$ contains $\P_k(E)$
with $k\ge1$.
Then, we introduce the continuous bilinear forms $a_h$ and $b_h$ from
$\Vh\times \Vh$ to $\RE$ which are continuous in $V$ and in $H$, respectively.
Moreover they satisfy the following local properties
\begin{equation}
\label{eq:VEM_global}
\aligned
&\ah(\vh,\wh)=\sum_{E\in\T_h}\aEh(\vh,\wh)\quad\forall \vh,\wh\in\Vh\\
&\bh(\vh,\wh)=\sum_{E\in\T_h}\bEh(\vh,\wh)\quad\forall \vh,\wh\in\Vh.
\endaligned
\end{equation}
The local bilinear form $\aEh(\cdot,\cdot)$ is assumed to be consistent and
stable, that is:
\begin{equation}
\label{eq:consistency}
\aEh(\uh,\vh)=\aE(\uh,\vh)
\end{equation}
whenever $\uh$ or $\vh$ belongs to $\P_k(E)$, and
\begin{equation}
\label{eq:stabilization}
\underline C\aE(\vh,\vh)\le \aEh(\vh,\vh)\le\overline C\aE(\vh,\vh)
\end{equation}
for all $\vh\in\VhE$.

From the stability property we have that $a_h$ is coercive on $V$ since it
inherits this property from the continuous bilinear form $a$. This together
with the continuity of $b_h$ implies that there exists a unique solution of the
discrete source Problem~\ref{pb:disc-source-pb} with
\[
\|\uh\|_V\le C\|f\|_H.
\]

The following approximation properties of the VEM space are useful for showing
the convergence of the discrete solution to the continuous one. We recall that
$V_0$ is the subspace of $V$ containing the solutions of
Problem~\ref{pb:source-pb} for $f\in H$. We denote by $V_0(D)$ the
subspace of the restrictions of the elements in $V_0$ to any $D\subset\Omega$
endowed with norm $\|\cdot\|_{V_0(D)}$. 
\begin{ass}
\label{as:pi-proj}
There exists $\omega_1(h)$ tending to $0$ as $h\to0$, such that for $w\in V_0$
there exists a piecewise polynomial element $w_\pi$ with $w_\pi|_E\in\P_k(E)$ satisfying
\[
\|w-w_\pi\|_{V(E)}\le\omega_1(h)\|w\|_{V_0(E)}
\]
for all $E\in\T_h$.
\end{ass}
\begin{ass}
\label{as:interp}
There exists $\omega_2(h)$ tending to $0$ as $h\to0$, such that for $w\in V_0$
one can find an interpolant $w^I\in\Vh\subset V$ satisfying
\[
\|w-w^I\|_{V(E)}\le\omega_2(h)\|w\|_{V_0(E)}
\]
for all $E\in\T_h$.
\end{ass}

The next assumption takes care of the consistency error that we might introduce
using the bilinear form $b_h$ instead of $b$.

\begin{ass}
\label{as:cons-b}
There exists $\omega_3(h)$ tending to $0$ as $h\to0$, such that for any
$f\in V$
\[
\sup_{\vh\in\Vh}\frac{b(f,\vh)-\bh(f,\vh)}{\|\vh\|_{V}}
\le\omega_3(h)\|f\|_V.
\]
\end{ass}
With the above abstract setting we are now in the
position of proving property $\mathbf{P1}$ for $T$ and $T_h$ defined in
Section~\ref{se:setting}.
\begin{proposition}
\label{Pr:P1}
If Assumptions~\ref{as:pi-proj}--\ref{as:cons-b} hold true, then
there exists $\omega(h)$ tending to $0$ as $h\to0$, such that for all
$\fh\in\No$ it holds 
\[
\|(T-\Th)\fh\|_{V}\le\omega(h)\|\fh\|_{V}.
\]
\end{proposition}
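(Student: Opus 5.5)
The plan is the classical Strang-type argument for virtual elements. We compare $u:=T\fh$ and $\uh:=\Th\fh$ through the interpolant $u^I$ of Assumption~\ref{as:interp}, and we absorb the nonconformity of $\bh$ using Assumption~\ref{as:cons-b}.

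\emph{Step 1: test against all of $\Vh$.} Fix $\fh\in\No$ and set $u=T\fh\in V_0$ and $\uh=\Th\fh$. Definition~\eqref{eq:Th} only tests against $\vh\in\No$. However, Lemma~\ref{le:chenonhaunriferimento} shows that $\Tht\fh\in\No$. We also have $V_h=\No\oplus\Kb$, the sum being $a_h$-orthogonal, and both sides of~\eqref{eq:Tht} vanish for $\vh\in\Kb$. Hence $\Tht\fh$ satisfies~\eqref{eq:Th}, and uniqueness (coercivity of $a_h$ on $\No$) gives $\uh=\Tht\fh$. Therefore
\[
\ah(\uh,\vh)=\bh(\fh,\vh)\quad\forall\vh\in\Vh .
\]

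\emph{Step 2: error equation.} Let $u^I\in\Vh$ be the interpolant of $u$ and $u_\pi$ the piecewise polynomial of Assumption~\ref{as:pi-proj}. Set $\delta_h=\uh-u^I\in\Vh$. By~\eqref{eq:disc-coercivity} and Step~1,
\[
\underline\alpha\|\delta_h\|_V^2\le \ah(\uh-u^I,\delta_h)=\bh(\fh,\delta_h)-\sum_{E}\aEh(u^I,\delta_h).
\]
We insert $b(\fh,\delta_h)=a(u,\delta_h)=\sum_E\aE(u,\delta_h)$. We then use the consistency~\eqref{eq:consistency}, $\aEh(u_\pi,\delta_h)=\aE(u_\pi,\delta_h)$, and split
\[
\underline\alpha\|\delta_h\|_V^2\le \bigl[\bh(\fh,\delta_h)-b(\fh,\delta_h)\bigr]+\sum_E\aE(u-u_\pi,\delta_h)+\sum_E\aEh(u_\pi-u^I,\delta_h).
\]

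\emph{Step 3: estimate the three terms.}
\begin{itemize}
\item The first term is at most $\omega_3(h)\|\fh\|_V\|\delta_h\|_V$ by Assumption~\ref{as:cons-b}, applied with $f=\fh\in V_h\subset V$.
\item The second term is bounded by local continuity of $\aE$, which gives $C\|u-u_\pi\|_{V(E)}\|\delta_h\|_{V(E)}$ on each element.
\item For the third term we use~\eqref{eq:stabilization} and Cauchy--Schwarz for the symmetric nonnegative form $\aEh$. This yields $\aEh(w,v)\le\overline C\,\aE(w,w)^{1/2}\aE(v,v)^{1/2}$, hence the bound $C\bigl(\|u-u_\pi\|_{V(E)}+\|u-u^I\|_{V(E)}\bigr)\|\delta_h\|_{V(E)}$. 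This step requires $\aE$ and $\aEh$ to be defined on $V(E)$-functions minus polynomials, which we take as part of the VEM setting.
\end{itemize}
Summing with discrete Cauchy--Schwarz and using Assumptions~\ref{as:pi-proj}--\ref{as:interp} gives the bound $C(\omega_1(h)+\omega_2(h))\|u\|_{V_0}\|\delta_h\|_V$. Then~\eqref{eq:V0} gives $\|u\|_{V_0}\le C\|\fh\|_H\le C\|\fh\|_V$, by the continuous embedding $V\subset H$. Dividing by $\|\delta_h\|_V$ leads to $\|\delta_h\|_V\le C(\omega_1+\omega_2+\omega_3)\|\fh\|_V$.

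\emph{Step 4: conclude.} The triangle inequality gives
\[
\|(T-\Th)\fh\|_V\le\|u-u^I\|_V+\|\delta_h\|_V,
\]
and the first term is again controlled by $\omega_2(h)\|\fh\|_V$. The result is the statement with $\omega=C(\omega_1+\omega_2+\omega_3)$.

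The main obstacle I expect is the passage from local to global bounds. It needs $\sum_E\|u\|_{V_0(E)}^2\le C\|u\|_{V_0}^2$ and $\sum_E\|v\|_{V(E)}^2\le C\|v\|_V^2$, i.e.\ additivity of the norms over the mesh, which holds for Sobolev-type norms and should be assumed explicitly. A second, milder point is Step~1: one must be careful to identify $\Th$ with $\Tht$ on $\No$, so that the error equation can be tested with $\delta_h$, which need not lie in $\No$.
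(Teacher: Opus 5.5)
Your proposal is correct and follows essentially the same route as the paper. Both arguments bound $\uh-u^I$ using coercivity of $\ah$, insert the piecewise polynomial $u_\pi$ and use consistency of $\aEh$, handle the $b$--$\bh$ mismatch with Assumption~\ref{as:cons-b}, and finish with a triangle inequality. You also make explicit several points the paper leaves implicit: the identification $\Th=\Tht|_{\No}$ that lets you test with $\uh-u^I\in\Vh$, the bound $\|u\|_{V_0}\le C\|\fh\|_V$ via~\eqref{eq:V0}, and the additivity of the norms over the elements.
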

\begin{proof}
Given $\fh\in\No$, let us consider $u=T\fh$ and $\uh=\Th\fh$ which are the solution of the
following equations
\begin{equation}
\label{eq:source-fh}
\aligned
&a(u,v)=b(\fh,v) \qquad \forall v\in V\\
&\ah(\uh,\vh)=\bh(\fh,\vh) \qquad \forall\vh\in\Vh
\endaligned
\end{equation}
Let $u^I\in\Vh$ be the interpolant of $u$ defined in Assumption~\ref{as:interp},
then we can use the ellipticity of $\ah$ and write 
%
\[
\aligned
\alpha_*\|\uh-u^I\|_V^2&\le\ah(\uh-u^I,\uh-u^I)\\
&=\ah(\uh,\uh-u^I)-\ah(u^I,\uh-u^I)\\
&=\bh(\fh,\uh-u^I)-\sum_{E\in\T_h}\aEh(u^I,\uh-u^I).
\endaligned
\]
We evaluate the last term introducing the piecewise $\P_k$ approximation
$u_\pi$ of $u$ (see Assumption~\ref{as:pi-proj}) and taking into account the
consistency of $\ah$, see~\eqref{eq:consistency}
\[
\aligned
\sum_{E\in\T_h}\aEh(u^I,\uh-u^I)
&=\sum_{E\in\T_h}\left(\aEh(u^I-u_\pi,\uh-u^I)+\aE(u_\pi,\uh-u^I)\right)\\
&\le C(\|u^I-u\|_V+\|u-u_\pi\|_h)\|\uh-u^I\|_V\\
&\quad+\sum_{E\in\T_h}\left(\aE(u_\pi-u,\uh-u^I)+\aE(u,\uh-u^I)\right)\\
&\le C(\|u^I-u\|_V+\|u-u_\pi\|_h)\|\uh-u^I\|_V+a(u,\uh-u^I),
\endaligned
\]
where 
$\|\cdot\|_h=\left(\sum_{E\in\T_h}\|\cdot\|_{V(E)}^2\right)^{1/2}$.
We put together the last two relations, using~\eqref{eq:source-fh} and we 
get
\[
\aligned
\alpha_*\|\uh-u^I\|_V^2&\le C(\|u^I-u\|_V+\|u-u_\pi\|_h)\|\uh-u^I\|_V\\
&\qquad  +\bh(\fh,\uh-u^I)-b(\fh,\uh-u^I).
\endaligned
\]
Due to Assumptions~\ref{as:pi-proj} and~\ref{as:interp}, the first term on the right
hand side is bounded by $(\omega_1(h)+\omega_2(h))\|u\|_{V_0}$. To
evaluate the difference between $\bh$ and $b$ we use Assumption~\ref{as:cons-b}
as follows
\[
\bh(\fh,\uh-u^I)-b(\fh,\uh-u^I)\le\omega_3(h)\|\fh\|_V\|\uh-u^I\|_V
\]
where we used that $\fh\in\Vh\subset V$. 

Hence we have obtained the desired estimate with
\[
\omega(h)=\omega_1(h)+\omega_2(h)+\omega_3(h).
\]
\end{proof}

\section{Virtual elements in practice}
\label{se:practice}

We consider an eigenvalue problem associated to a second order elliptic
diffusion-reaction operator. Without losing generality we restrict ourselves to  
the eigenvalue problem associated with the Laplace operator: find
$\lambda\in\RE$ such that there exists $u\in\Huo$ with $u\ne0$ satisfying
\begin{equation}
\label{eq:poisson}
(\nabla u,\nabla v)=\lambda(u,v)\quad \forall v\in\Huo.
\end{equation} 
Problem~\eqref{eq:poisson} fits into the abstract framework presented in
Section~\ref{se:setting} with $V=\Huo$, $H=\Ld$, $a(u,v)=(\nabla u,\nabla v)$ and
$b(u,v)=(u,v)$. The solution operator $T:\Huo\to\Huo$ is defined as follows:
given $f\in\Huo$, $Tf=u$ where $u$ is the solution of the equation:
\begin{equation}
\label{eq:source}
a(u,v)=(f,v)\quad\forall v\in\Huo.
\end{equation}
Since, for polygonal domains, the solution of~\eqref{eq:source}
belongs to $H^{1+s}(\Omega)$ for some $s>1/2$ we have that
$V_0\subset H^{1+s}(\Omega)$. Moreover, the following bound holds
\begin{equation}
\label{eq:regu}
\|u\|_{1+s}\le C\|f\|_0.
\end{equation}
Since $H^{1+s}(\Omega)$ is compactly
embedded into $\Huo$ the solution operator $T$ results to be compact.

\subsection{Virtual element method}
\label{se:VEM}

We consider a decomposition of $\Omega$ into polygons as described at the
beginning of the previous section. 
In each polygon $E$, we define a local virtual space for $k\ge1$ as follows,
\begin{equation}
\label{eq:localVEM}
\aligned
\VhkE=&\{\vh\in H^1(E): \vh\in C^0(\partial E),
\ \vh|_e\in\P^k(e)\ \forall e\in\partial E,\\
&\quad \Delta \vh\in\P^k(E),\ (\Pinabla \vh-\vh,q)_E=0 \ 
\forall q\in\calM^{k,k-1}(E)\}.
\endaligned
\end{equation}
Here $\Pinabla$ is the projection operator from $H^1(E)$ to $\P^k(E)\subset\VhkE$ such that
\begin{equation}
\label{eq:pinabla}
\aligned
&\aE(\Pinabla v, q)=\aE(v,q)\quad\forall q\in\P^{k}(E),\\
&(\Pinabla v, 1)_E=(v,1)_E, \text{ for } k\ge2\\
&(\Pinabla v, 1)_{\partial E}=(v,1)_{\partial E}, \text{ for } k=1,
\endaligned
\end{equation}
with $\aE(v,w)=(\nabla v,\nabla w)_E$. Moreover, $\calM^{k}(E)$ is
the subspace of $\P^k(E)$ containing monomials of degree $k$.
We refer to~\cite{enhanced} for the detailed construction of the local virtual
element space and the definition of the degrees of freedom.

\begin{remark}
By definition, the local virtual element space contains not only polynomials of
degree $k$, but also  \emph{virtual} unknown functions. 
Using the degrees of freedom, we have that $\Pinabla\vh$ is computable 
for all $\vh\in\VhkE$, so that we can evaluate $\aE(\Pinabla\uh,\Pinabla\vh)$
for all $\uh,\vh\in\VhkE$, but the associated algebraic matrix could be
singular since we can have $\Pinabla\vh=0$ for some $\vh\ne0$.

The usual stabilization of $\aE$ involves a
suitable bilinear form $\SE$ with the following property: there exist positive
constants $\underline C$ and $\overline C$ satisfying 
\begin{equation}
\label{eq:stabilization-SE}
\underline C\aE(\vh,\vh)\le \SE(\vh,\vh)\le\overline C\aE(\vh,\vh)
\end{equation}
for all $\vh$ belonging to the kernel of $\Pinabla$.

\end{remark}
Hence, we use the following discrete version of the bilinear form
$\aE$
\begin{equation}
\label{eq:aEh}
\aEh(\uh,\vh)=\aE(\Pinabla \uh,\Pinabla \vh)
              +\SE((\uh-\Pinabla\uh,\vh-\Pinabla\vh).
\end{equation}
We observe that if either $\uh$ or $\vh$ in the above equations belong to
$\P_k(E)$ then the local discrete bilinear form $\aEh$ satifies both the
consistency and stability assumptions, see~\eqref{eq:consistency}
and~\eqref{eq:stabilization}, respectively.

Analogous considerations hold for the bilinear form $\bE$. 
Setting $\bE(v,w)=(v,w)_E$, the projection operator
$\Pio:L^2(E)\to\P^k(E)\subset\VhkE$ is used as follows
\begin{equation}
\label{eq:Pio}
\bE(\Pio v,q)=\bE(v,q)\quad \forall q\in\P^k(E).
\end{equation}
We have that $\Pio\vh$ is computable for all $\vh\in\VhkE$ using the degrees
of freedom.
Then, the discrete version of $\bE$ is
\begin{equation}
\label{eq:bEh}
\bEh(\uh,\vh)=\bE(\Pio \uh,\Pio \vh) \quad \forall \uh,\,\vh\in\VhkE.
\end{equation}
Notice that, differently than the bilinear form $\aEh$, we do not add a
stabilization term to $\bEh$. 

The global VEM space is then given by:
\[
\Vhk=\{v\in\Huo: v|_E\in\VhkE\ \forall E\in\T_h\}\\
\]
In the following, with abuse of
notation, we denote by $\Pinabla$ also the global projector operator
so that, for all $\vh\in\Vhk$, $\Pinabla \vh\in\P_k(\T_h)$ coincides
with the local projector when restricted to an element. Similarly, for
$f\in\Ld$, we denote by $\Pio f$ the piecewise $L^2$-projector onto
$\P_k(\T_h)$. Therefore, the bilinear form $\bh$ is well defined also in
$\Ld\times\Ld$.

Given $\vh\in\Vhk$, its projection $\Pio\vh$ could vanish on each element $E\in\T_h$, 
and we define the kernel of $\bh$ as follows
\begin{equation}
\label{eq:kbh-VEM}
\Kb=\{\wh\in\Vhk: b_h(\wh,\vh)=0\quad\forall \vh\in\Vhk\}=\{\wh\in\Vhk: \Pio\wh=0\}.
\end{equation}
We recall basic results for the VEM approximation which show that
Assumptions~\ref{as:pi-proj} and~\ref{as:interp} are satisfied,
see ~\cite{basic,Acta-VEM}.

\begin{lemma}
\label{le:pi-proj}
Under the assumptions on the mesh $\T_h$, for $w\in H^{1+s}(E)$ with $0<s\le k$
there exists an element $w_\pi\in\P_k(E)$ satisfying
\[
\|w-w_\pi\|_{0,E}+h_E|w-w_\pi|_{1,E}\le C h_E^{s+1}|w|_{s+1,E}
\]
for a suitable constant $C>0$ independent of $h_E$.
\end{lemma}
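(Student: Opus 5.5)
This is the classical Bramble--Hilbert/Dupont--Scott estimate on star-shaped domains, and I would prove it by taking $w_\pi$ to be the averaged Taylor polynomial of $w$ of degree $k$. Fix $E\in\T_h$ and the ball $B\subset E$ of radius $\rho_E\ge\delta_2 h_E$ with respect to which $E$ is star shaped. Choose a smooth cut-off $\phi$ supported in $B$ with $\int\phi=1$ and $\|\phi\|_{L^\infty}\le C\rho_E^{-2}$. Set
\[
w_\pi(x)=\int_B T^k_y w(x)\,\phi(y)\,dy ,
\]
where $T^k_y w$ is the Taylor polynomial of degree $k$ about $y$. Then $w_\pi\in\P_k(E)$ and it reproduces polynomials of degree at most $k$.

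The argument has three steps. First, for integer $s=m\in\{1,\dots,k\}$, apply the Sobolev integral representation of the remainder $w-w_\pi$ on the star-shaped set $E$. This gives
\[
|w-w_\pi|_{j,E}\le C\,h_E^{m+1-j}|w|_{m+1,E},\qquad j=0,1,
\]
with $C$ depending only on $k$ and the chunkiness ratio $h_E/\rho_E\le 1/\delta_2$. For $m<k$ one uses the degree-$m$ averaged Taylor polynomial instead; since it still lies in $\P_k(E)$, this causes no loss. Second, I would fix the $h_E$-dependence by scaling. Map $E$ to $\hat E=h_E^{-1}E$, which has unit diameter and a uniformly bounded chunkiness parameter, prove the estimate there with a constant uniform over this class of shapes, and scale back. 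The factors $h_E^{s+1}$ and $h_E^{s}$ in front of $|w|_{s+1,E}$ for the $L^2$ and $H^1$ terms then come out by homogeneity. Third, for non-integer $s$, either interpolate between neighbouring integer cases, or, more directly, use a fractional Poincaré/Bramble--Hilbert inequality on the reference shape class: apply the integer estimate with $m=\lfloor s\rfloor$ to the top derivatives and a fractional Poincaré bound $\|g-\bar g\|_{0,\hat E}\le C|g|_{\sigma,\hat E}$, valid uniformly on star-shaped domains.

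The main obstacle is making the constant uniform over all polygons, since the elements are not affine images of one reference element. What makes this work is that the averaged Taylor construction gives constants depending only on $k$ and $h_E/\rho_E$, so no compactness over a shape family is needed. The fractional case needs the same uniformity for the fractional Poincaré inequality, and this is the step I expect to require the most care. I would try to prove it directly from the Gagliardo seminorm, using that the ball $B$ occupies a fixed fraction of the area of $\hat E$ and that $\hat E$ is covered by segments from $B$. Failing that, I would simply cite \cite{basic,Acta-VEM} (Brenner--Scott type results) for this step.
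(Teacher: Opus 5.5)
The paper does not prove this lemma; it recalls it from \cite{basic,Acta-VEM}, which in turn rely on the Dupont--Scott/Brenner--Scott theory, i.e.\ the averaged-Taylor-polynomial/Bramble--Hilbert argument you outline, with constants depending only on $k$ and $h_E/\rho_E$, so your proposal is correct and follows essentially the same route. The fractional step is easier than you fear: write $s=m+\sigma$ and subtract the homogeneous polynomial of degree $m+1\le k$ whose top derivatives equal the means of $D^\alpha w$, $|\alpha|=m+1$; then use $\|g-\bar g\|_{0,E}^2=\frac{1}{2|E|}\iint_{E\times E}|g(x)-g(y)|^2\,dx\,dy\le\frac{h_E^{2+2\sigma}}{2|E|}|g|_{\sigma,E}^2$ together with $|E|\ge\pi\rho_E^2\ge\pi\delta_2^2h_E^2$, which holds with no further geometric argument.
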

Using the degrees of freedom defined above, one can define an interpolant
$w^I\in\Vhk$ for all $w\in H^{1+s}(\Omega)$ with $0<s\le k$ such that the
following approximation property holds true.
\begin{lemma}
\label{le:interp}
Under the assumptions on the mesh $\T_h$, there exists a constant $C>0$
independent of $h$ such that for $w\in H^{1+s}(\Omega)$ with $0<s\le k$,
there exists an interpolant $w^I\in\Vhk$ satisfying
\[
\|w-w^I\|_{0,E}+h_E|w-w^I|_{1,E}\le C h_E^{s+1}|w|_{s+1,E}\quad
\text{ for all }E\in\T_h.
\]
\end{lemma}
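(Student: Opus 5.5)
The interpolant is built element by element from the degrees of freedom of $\VhkE$: vertex values, edge moments of order up to $k-2$, and interior moments against $\P_{k-2}(E)$. I would then use a Bramble--Hilbert argument: show that $w\mapsto w-w^I$ is bounded on $H^{1+s}(E)$ and vanishes on $\P_k(E)$, and conclude by scaling. Since $s>0$ in two dimensions, $H^{1+s}(E)\subset C^0(\overline E)$, so the vertex values are well defined. The local interpolants agree on shared edges, because their traces are determined by the vertex values and edge moments only. Hence $w^I$ is globally continuous, vanishes on $\partial\Omega$ when $w$ does, and belongs to $\Vhk$.

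\textbf{Polynomial invariance.} If $p\in\P_k(E)$, then $p\in\VhkE$, and unisolvence of the degrees of freedom gives $p^I=p$. Therefore
\[
w-w^I=(w-w_\pi)-(w-w_\pi)^I,
\]
where $w_\pi$ is taken from Lemma~\ref{le:pi-proj}. It then suffices to prove the stability bound
\[
\|v^I\|_{0,E}+h_E|v^I|_{1,E}\le C\Bigl(\|v\|_{L^\infty(E)}+h_E|v|_{1,E}+h_E^{1+s}|v|_{1+s,E}\Bigr)
\]
together with a scaled Sobolev embedding.

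\textbf{Stability bound.} I would map $E$ to a reference polygon of unit diameter. The mesh assumptions (star-shapedness with $\rho_E\ge\delta_2h_E$ and edges comparable to $h$) make all constants depend only on $\delta_1,\delta_2$ and $k$. On the reference element, the degrees of freedom of $v$ are bounded by $\|v\|_{C^0}+\|v\|_{1,E}\lesssim\|v\|_{1+s,E}$. What remains is a bound on the norm of the virtual basis functions in terms of their degrees of freedom.

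\textbf{Main obstacle.} The hard step is this uniform bound on the basis functions over a family of polygons with a varying number of edges. There is no fixed reference element here, so compactness arguments do not apply. I would follow the approach of~\cite{basic,Acta-VEM}. The $H^1$ seminorm of $v^I$ is controlled by the boundary data in $H^{1/2}(\partial E)$, which in turn is bounded by a uniformly shape-regular polynomial trace. The Laplacian and the enhancement data are controlled by the moments through an inverse estimate on $\P_k$. Combining these gives $|v^I|_{1,E}\lesssim|v|_{1,E}+h_E^{s}|v|_{1+s,E}$. The $L^2$ bound then follows from a Poincaré-type inequality, applied after subtracting the mean.

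Applying all of this with $v=w-w_\pi$ and using Lemma~\ref{le:pi-proj} yields the stated estimate. An alternative that avoids the interpolation-stability issue would be to define $w^I$ as a Clément-type quasi-interpolant. Since the paper works with the degree-of-freedom interpolant, I would keep that choice.
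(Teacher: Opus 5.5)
The paper does not prove this lemma; it quotes it from \cite{basic,Acta-VEM}. Your outline follows the standard argument behind those references: the degree-of-freedom interpolant, exactness on $\P_k(E)$, a uniform stability bound, and scaling. Like the paper, you defer the genuinely hard step, uniform control of virtual functions by their degrees of freedom, to that literature.

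One step, however, fails as written. Your displayed stability bound has the wrong scaling in the term $\|v\|_{L^\infty(E)}$. Every other term behaves like $h_E$ times the size of $v$. The reference-element argument actually yields $h_E\|v\|_{L^\infty(E)}$, because $\|v^I\|_{0,E}=h_E\|\hat v^I\|_{0,\hat E}$ in two dimensions. Taken at face value, your reduction combines the display with the scaled embedding $\|v\|_{L^\infty(E)}\lesssim h_E^{-1}\|v\|_{0,E}+|v|_{1,E}+h_E^{s}|v|_{1+s,E}$, for $v=w-w_\pi$. This gives only $\|w-w^I\|_{0,E}\lesssim h_E^{s}|w|_{1+s,E}$ and, through the $h_E|v^I|_{1,E}$ term, $|w-w^I|_{1,E}\lesssim h_E^{s-1}|w|_{1+s,E}$. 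Your separate seminorm bound $|v^I|_{1,E}\lesssim|v|_{1,E}+h_E^{s}|v|_{1+s,E}$ repairs the second estimate but not the first.

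The fix is already implicit in your Poincar\'e step. For $k\ge2$ the mean of $v^I$ equals the mean of $v$, since the moment against $1$ is a degree of freedom. For $k=1$ it equals $(\Pi_1^\nabla v^I,1)_E/|E|$ by the enhancement constraint, which depends only on the vertex values. Either way the mean is bounded by $\|v\|_{L^\infty(E)}$ and gets multiplied by $|E|^{1/2}\sim h_E$. So the display should carry $h_E\|v\|_{L^\infty(E)}$, and then the stated $h_E^{s+1}$ rate follows.

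Two smaller points. First, the term $h_E^{1+s}|v|_{1+s,E}$ with $v=w-w_\pi$ needs $|w-w_\pi|_{1+s,E}\le C|w|_{1+s,E}$, which Lemma~\ref{le:pi-proj} as stated does not provide. You can get it by taking $w_\pi$ to be the averaged Taylor polynomial (Dupont--Scott), or by applying Bramble--Hilbert on the rescaled element in the full $H^{1+s}$ norm.

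Second, under the paper's assumptions the perimeter of $E$ is $O(h_E)$, by star-shapedness with $\rho_E\ge\delta_2h_E$. Every edge satisfies $h_e\ge h/\delta_1\ge h_E/\delta_1$, so the number of edges per element is uniformly bounded. You should invoke this explicitly, since it is what makes your $H^{1/2}(\partial E)$ bound on the piecewise polynomial trace uniform. The family of rescaled elements is much tamer than ``a varying number of edges'' suggests.
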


\subsection{Discrete eigenvalue problem and solution operator}
\label{se:VEM-Th}
With the definitions given in the previous section, the discrete counterpart of
the eigenvalue problem~\eqref{eq:poisson} reads: find $\lambda_h\in\RE$ such
that there exists $\uh\in\Vhk$ with $\uh\ne0$ satisfying
\begin{equation}
\label{eq:Poissonh}
\ah(\uh,\vh)=\lambda_h\bh(\uh,\vh)\quad\forall \vh\in\Vhk.
\end{equation}
We associate to the eigenvalue problem~\eqref{eq:Poissonh}, the discrete source
problem as follows: given $f\in\Ld$ find $\uh\in\Vhk$ such that 
\begin{equation}
\label{eq:sourceh}
\ah(\uh,\vh)=\bh(f,\vh)\quad\forall \vh\in\Vhk.
\end{equation}
It is well-known that problem~\eqref{eq:sourceh} admits a unique solution,
see~\cite{basic,Acta-VEM}. 
\begin{lemma}
\label{le:exist_discr}
For all $f\in\Ld$ there exists a unique solution $\uh\in\Vhk$ of
problem~\eqref{eq:sourceh} with the following a priori estimate
\[
\|\uh\|_{1,\Omega}\le C\|f\|_{0,\Omega}.
\]
\end{lemma}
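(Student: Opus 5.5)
The plan is to apply the Lax--Milgram lemma on the finite dimensional space $\Vhk$ with the norm $\|\cdot\|_{1,\Omega}$. First, I establish uniform coercivity of $\ah$. For each element, the stability bound~\eqref{eq:stabilization-SE} together with the orthogonality built into~\eqref{eq:pinabla} gives the following. Since $\aE(\Pinabla\vh,\vh-\Pinabla\vh)=0$, we have $\aE(\vh,\vh)=\aE(\Pinabla\vh,\Pinabla\vh)+\aE(\vh-\Pinabla\vh,\vh-\Pinabla\vh)$. The function $\vh-\Pinabla\vh$ lies in the kernel of $\Pinabla$, so $\SE$ controls its energy from below and above. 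Hence~\eqref{eq:stabilization} holds with constants $\min(1,\underline C)$ and $\max(1,\overline C)$, which are independent of $h$. Summing over $E\in\T_h$ gives $\ah(\vh,\vh)\ge \min(1,\underline C)|\vh|_{1,\Omega}^2$. The Poincar\'e inequality on $\Huo$ then gives $\ah(\vh,\vh)\ge\alpha_*\|\vh\|_{1,\Omega}^2$ with $\alpha_*$ independent of $h$. Continuity of $\ah$ follows in the same way from the upper bound.

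Second, I bound the right hand side functional $\vh\mapsto\bh(f,\vh)$ for $f\in\Ld$. By~\eqref{eq:bEh} and the Cauchy--Schwarz inequality elementwise,
\[
|\bh(f,\vh)|\le\sum_{E\in\T_h}\|\Pio f\|_{0,E}\|\Pio\vh\|_{0,E}\le\|f\|_{0,\Omega}\|\vh\|_{0,\Omega}\le\|f\|_{0,\Omega}\|\vh\|_{1,\Omega},
\]
where I use that the $L^2$ projection $\Pio$ is a contraction on each element. No stabilization of $\bh$ is needed for this step: the missing positivity of $\bh$ plays no role in the source problem, since only $\ah$ must be coercive.

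Finally, Lax--Milgram gives a unique $\uh\in\Vhk$ solving~\eqref{eq:sourceh}. To get the a priori estimate, I test with $\vh=\uh$:
\[
\alpha_*\|\uh\|_{1,\Omega}^2\le\bh(f,\uh)\le\|f\|_{0,\Omega}\|\uh\|_{1,\Omega},
\]
so $\|\uh\|_{1,\Omega}\le\alpha_*^{-1}\|f\|_{0,\Omega}$. Uniqueness is immediate from coercivity, since in finite dimension injectivity implies existence.

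The only mildly delicate point is the first step. There I must make sure that the coercivity constant does not depend on $h$, which rests on $\SE$ satisfying~\eqref{eq:stabilization-SE} uniformly on $\ker\Pinabla$ under the mesh assumptions. I take this from the cited references~\cite{basic,Acta-VEM} as a standing property of the chosen stabilization.
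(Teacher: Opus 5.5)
Your argument is correct. You get uniform coercivity of $\ah$ from the Pythagorean splitting $\aE(\vh,\vh)=\aE(\Pinabla\vh,\Pinabla\vh)+\aE(\vh-\Pinabla\vh,\vh-\Pinabla\vh)$ together with~\eqref{eq:stabilization-SE}, continuity of $\vh\mapsto\bh(f,\vh)$ from the elementwise $L^2$-contractivity of $\Pio$, and then apply Lax--Milgram. The paper gives no proof of its own and simply refers to~\cite{basic,Acta-VEM}, where exactly this standard argument is used, so your route is essentially the paper's.
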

In the following lemma we report the error estimate for the VEM approximation
of the source problem, provided by~\cite[Theorem~3.1]{basic}.
\begin{lemma}
\label{le:err-estimate}
Let $u\in\Huo$ and $\uh\in\Vhk$ be the solutions to problems~\eqref{eq:source}
and~\eqref{eq:sourceh}, respectively.
There exists a constant $C>0$ independent of $h$ such that
\[
|u-\uh|_{1,\Omega}\le C\left(|u-u^I|_{1,\Omega}+|u-u_\pi|_{h,\Omega}
+\sup_{\vh\in\Vhk}\frac{b(f,\vh)-\bh(f,\vh)}{\|\vh\|_{1,\Omega}}\right),
\]
where $|\cdot|_{h,\Omega}$ stands for the broken $H^1$-seminorm.
\end{lemma}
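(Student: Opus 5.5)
We argue as in the proof of Proposition~\ref{Pr:P1}, now on the whole space $\Vhk$ and with datum $f\in\Ld$. Set $e_h=\uh-u^I$, with $u^I$ the interpolant of Lemma~\ref{le:interp}. Existence and uniqueness of $\uh$ come from Lemma~\ref{le:exist_discr}, so only the estimate needs proof. By the stability \eqref{eq:stabilization} and the Poincaré inequality on $\Huo$, $\ah$ is coercive in the $H^1$ seminorm, which gives
\[
\underline C\,|e_h|_{1,\Omega}^2\le \ah(\uh,e_h)-\ah(u^I,e_h)=\bh(f,e_h)-\sum_{E\in\T_h}\aEh(u^I,e_h).
\]

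The second term is handled elementwise. On each $E$, take the local polynomial $u_\pi$ of Lemma~\ref{le:pi-proj} (only $u\in H^1$ is needed to write the splitting) and write $u^I=(u^I-u_\pi)+u_\pi$. Consistency \eqref{eq:consistency} gives $\aEh(u_\pi,e_h)=\aE(u_\pi,e_h)$. Continuity of $\aEh$, which follows from \eqref{eq:stabilization} and Cauchy--Schwarz for the symmetric semidefinite form, gives
\[
\aEh(u^I-u_\pi,e_h)\le C\,|u^I-u_\pi|_{1,E}\,|e_h|_{1,E}.
\]
We then insert $u$ as $\aE(u_\pi,e_h)=\aE(u_\pi-u,e_h)+\aE(u,e_h)$ and sum over the elements. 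Using $a(u,e_h)=b(f,e_h)$, this yields
\[
\underline C\,|e_h|_{1,\Omega}^2\le C\bigl(|u^I-u|_{1,\Omega}+|u-u_\pi|_{h,\Omega}\bigr)|e_h|_{1,\Omega}+\bh(f,e_h)-b(f,e_h).
\]
We bound the last difference by the supremum appearing in the statement times $\|e_h\|_{1,\Omega}$. Since $\|e_h\|_{1,\Omega}\le C|e_h|_{1,\Omega}$ by Poincaré, we can divide by $|e_h|_{1,\Omega}$.

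To conclude, the triangle inequality $|u-\uh|_{1,\Omega}\le|u-u^I|_{1,\Omega}+|e_h|_{1,\Omega}$ gives the claim. The only delicate point is the bookkeeping of the triangle inequality $|u^I-u_\pi|_{1,E}\le|u^I-u|_{1,E}+|u-u_\pi|_{1,E}$. Beyond that, the argument is the standard Strang-type estimate of \cite[Theorem~3.1]{basic}, and there is no real obstacle.
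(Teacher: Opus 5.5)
Your proposal is correct and follows essentially the same route as the paper. The paper proves the lemma by citing the Strang-type estimate of \cite[Theorem~3.1]{basic}, and it reproduces exactly your argument in the proof of Proposition~\ref{Pr:P1}: coercivity applied to $e_h=u_h-u^I$, the splitting $u^I=(u^I-u_\pi)+u_\pi$, consistency and continuity of $a_h^E$, and the identity $a(u,e_h)=b(f,e_h)$. The points you leave implicit are harmless: the sign of $b_h(f,e_h)-b(f,e_h)$ is handled by testing the supremum with $-e_h\in V_h^k$, and the case $e_h=0$ is trivial.
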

The discrete problem~\eqref{eq:sourceh} fits into the framework of
Section~\ref{se:setting}. Indeed, $\ah$ is coercive and continuous and $\bh$ is
continuous. In addition, the bilinear form $\bh$ might not be positive due to
its construction which makes use of the projector $\Pio$. Since we
are not adding any stabilization term, the kernel $\Kb$
(see~\eqref{eq:kernel-bh}) might not be
reduced to the $0$ element, as we shall show in the numerical results section.

Let us recall the solution operator associated to~\eqref{eq:sourceh}. In view
of the solution of the eigenvalue problem, following the discussion of
Section~\ref{se:setting}, we consider $\fh$ in 
\[
\No=\{\wh\in\Vhk: \ah(\wh,\vh)=0\ \forall \vh\in\Kb\},
\]
the orthogonal complement of $\Kb$ with respect to $\ah$, see~\eqref{eq:orth-kb}.  
Then $\Th:\No\to\Vhk$ maps elements of $\No$ to the solution of the following
equation:
\begin{equation}
\label{eq:ThVEM}
\ah(\Th \fh,\vh)=\bh(\fh,\vh)\quad\forall\vh\in\Vhk.
\end{equation}
As observed in Lemma~\ref{le:chenonhaunriferimento}, $\Th:\No\to\No$.

It remains to show that properties $\mathbf{P1}$ and $\mathbf{P2}$ hold true
for the VEM approximation of the eigensolutions of problem~\eqref{eq:poisson}.
In the following section, we shall investigate the validity of these two
properties depending on the degree of the virtual element spaces and on the
type of meshes.

\section{Convergence analysis}
\label{se:analysis}
In this section, we prove properties $\mathbf{P1}$ and $\mathbf{P2}$ for $T$
and $\Th$ defined, respectively, in Subsection~\ref{se:practice} and
in~\eqref{eq:ThVEM}. In order to show that $\mathbf{P1}$ holds true,
we apply Proposition~\ref{Pr:P1}. Since Assumptions~\ref{as:pi-proj}
and~\ref{as:interp} have been already checked, it remains to show that
Assumption~\ref{as:cons-b} is verified.
\begin{lemma}
\label{le:cons-b}
For any $f\in H^1(\Omega)$  we have
\[ 
\sup_{\vh\in\Vh}\frac{b(f,\vh)-\bh(f,\vh)}{\|\vh\|_1}
\le C h\|f\|_1.
\]
\end{lemma}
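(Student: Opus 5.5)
The plan is to work element by element and use only the orthogonality properties of the $L^2$ projector $\Pio$. By \eqref{eq:VEM_global} and \eqref{eq:bEh} we have, for $f\in H^1(\Omega)$ and $\vh\in\Vhk$,
\[
b(f,\vh)-\bh(f,\vh)=\sum_{E\in\T_h}\left[(f,\vh)_E-(\Pio f,\Pio\vh)_E\right].
\]
Since $\Pio f\in\P_k(E)$, the definition \eqref{eq:Pio} gives $(\Pio f,\Pio\vh)_E=(\Pio f,\vh)_E$. Likewise, because $\Pio\vh\in\P_k(E)$, we have $(f,\Pio\vh)_E=(\Pio f,\Pio\vh)_E$. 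Therefore each local term equals
\[
(f,\vh)_E-(\Pio f,\Pio\vh)_E=(f-\Pio f,\vh)_E=(f-\Pio f,\vh-\Pio\vh)_E .
\]
The second equality holds because $f-\Pio f$ is $L^2(E)$-orthogonal to $\P_k(E)$.

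The key step is to estimate both factors using the approximation properties of $\Pio$ on star-shaped elements. The Bramble--Hilbert / Poincaré-type estimate underlying Lemma~\ref{le:pi-proj} (with $s=0$, i.e.\ approximation by constants, which $\P_k(E)$ contains), combined with the best-approximation property of $\Pio$ in $L^2(E)$, gives $\|w-\Pio w\|_{0,E}\le \|w-\overline{w}_E\|_{0,E}\le C h_E|w|_{1,E}$ for every $w\in H^1(E)$. Here $\overline{w}_E$ denotes the mean of $w$ on $E$. The constant depends only on the shape-regularity parameter $\delta_2$. We apply this to $w=f$ and to $w=\vh$; the latter is legitimate since $\VhkE\subset H^1(E)$. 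This yields
\[
|(f-\Pio f,\vh-\Pio\vh)_E|\le C h_E^2|f|_{1,E}|\vh|_{1,E}.
\]
Summing over $E$ and using Cauchy--Schwarz gives $|b(f,\vh)-\bh(f,\vh)|\le Ch^2|f|_1|\vh|_1\le Ch\|f\|_1\|\vh\|_1$, which is the claim with an extra power of $h$ to spare. Alternatively, to get exactly $h$ one may bound $\|\vh-\Pio\vh\|_{0,E}\le\|\vh\|_{0,E}$ and use the estimate only on $f$.

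There is no real obstacle here. The only point needing care is justifying the local estimate $\|w-\Pio w\|_{0,E}\le Ch_E|w|_{1,E}$ with a constant uniform over the mesh family. This follows from the Poincaré inequality on domains star-shaped with respect to a ball of radius $\rho_E\ge\delta_2 h_E$, which is the same ingredient behind Lemma~\ref{le:pi-proj}. The rest is the algebraic manipulation above, which uses only the symmetry of the projection.
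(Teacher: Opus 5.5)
Your proof is correct and is essentially the paper's argument: the paper also rewrites each local term as $(\Pio f - f,\vh)_E$ using the defining property of $\Pio$, then applies Cauchy--Schwarz and $\|f-\Pio f\|_{0,E}\le Ch_E\|f\|_{1,E}$, which is exactly your ``alternative'' route. Your extra step of also subtracting $\Pio\vh$ is a valid refinement that gives the sharper bound $Ch^2|f|_1|\vh|_1$, more than Assumption~\ref{as:cons-b} requires.
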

\begin{proof}
By definition of the bilinear forms $\bh$ and $b$, we have
\[
\aligned
\bh(f,\vh)-b(f,\vh)&
=\sum_{E\in\T_h}\bE(\Pio f-f,\vh)\\
&\le C \sum_{E\in\T_h}\|\Pio f-f\|_{0,E}\|\vh\|_{0,E}\\
&\le C h\|f\|_1\|\vh\|_1,
\endaligned
\]
where we used the continuity of $\bE$ and that $f\in H^1(\Omega)$. 
\end{proof}

We collect the previous results in the following statement.

\begin{thm}
For any mesh satisfying the assumptions described at the beginning of
Section~\ref{se:vem}, property $\mathbf{P1}$ holds true.
\end{thm}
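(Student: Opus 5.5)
The plan is to obtain the statement directly from Proposition~\ref{Pr:P1}, by checking its three hypotheses for the concrete VEM setting with $V=\Huo$, $H=\Ld$ and $V_0\subset H^{1+s}(\Omega)$, $s>1/2$.

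First, Assumption~\ref{as:pi-proj} follows from Lemma~\ref{le:pi-proj}. Apply it elementwise with exponent $\min(s,k)>0$. For $w\in V_0$ this gives $\|w-w_\pi\|_{1,E}\le C h_E^{\min(s,k)}\|w\|_{1+s,E}$, so we may take $\omega_1(h)=Ch^{\min(s,k)}$.

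Second, Assumption~\ref{as:interp} follows in the same way from Lemma~\ref{le:interp}, with $\omega_2(h)=Ch^{\min(s,k)}$. In both cases we need $h_E\le h\le C$ so that the $L^2$ part is absorbed into the $H^1$ part.

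Third, Assumption~\ref{as:cons-b} is exactly Lemma~\ref{le:cons-b}, with $\omega_3(h)=Ch$. This uses $\|\Pio f-f\|_{0,E}\le Ch_E|f|_{1,E}$ for $k\ge1$, together with a Cauchy--Schwarz summation over elements. It is needed only for $f\in H^1(\Omega)$, which is the setting of Proposition~\ref{Pr:P1}, since there $\fh\in\No\subset\Vhk\subset\Huo$.

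The one point needing care is that Proposition~\ref{Pr:P1} bounds $\|(T-\Th)\fh\|_V$ through $\|u\|_{V_0}$ with $u=T\fh$. I must therefore close the chain $\|u\|_{V_0}\le C\|u\|_{1+s}\le C\|\fh\|_0\le C\|\fh\|_1$, using the regularity estimate~\eqref{eq:regu} and the continuous embedding $H^1\subset L^2$. This yields $\|(T-\Th)\fh\|_1\le \omega(h)\|\fh\|_1$ uniformly for $\fh\in\No$, with $\omega(h)=C(h^{\min(s,k)}+h)\to0$, which is precisely $\mathbf{P1}$.

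The restriction to $\No$ causes no difficulty for $\mathbf{P1}$: $\Th$ agrees with $\Tht$ on $\No$ by~\eqref{eq:ThVEM} and Lemma~\ref{le:chenonhaunriferimento}. The estimate also holds for every $\fh\in\Vhk$, and no mesh-dependent property of $\Kb$ is used. This is why the statement holds for any admissible mesh and any $k$. The genuine obstacle, $\mathbf{P2}$, is not part of this statement.
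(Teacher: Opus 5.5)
Your proposal is correct and follows the paper's own argument. The theorem is obtained by applying Proposition~\ref{Pr:P1}, with Assumptions~\ref{as:pi-proj} and~\ref{as:interp} supplied by Lemmas~\ref{le:pi-proj} and~\ref{le:interp}, and Assumption~\ref{as:cons-b} supplied by Lemma~\ref{le:cons-b}. Your explicit closing of the chain $\|T\fh\|_{1+s}\le C\|\fh\|_0\le C\|\fh\|_1$ via~\eqref{eq:regu} is needed to turn the bound in $\|u\|_{V_0}$ into one in $\|\fh\|_V$, and the paper leaves this step implicit in its proof of Proposition~\ref{Pr:P1}.
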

The proof of property $\mathbf{P2}$ depends on the number of the edges of the
polygons of the mesh and on the degree of polynomials in the VEM space $\Vhk$.
In the following proposition, we show that property $\mathbf{P2}$ holds true for
polygons of any number of edges and for $k=1$ and $k=2$.

\begin{proposition}
\label{pr:P2-k1}
Let us consider $k=1$ and $k=2$, then for any mesh satisfying the
assumptions described at the beginning of Section~\ref{se:vem}, property
$\mathbf{P2}$ holds true.
\end{proposition}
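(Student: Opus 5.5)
The plan is to take the standard interpolant $u^I\in\Vhk$ of $u\in V_0$ given by Lemma~\ref{le:interp}, and correct it by its $\ah$-orthogonal projection onto $\Kb$. Concretely, let $\wh\in\Kb$ solve
\[
\ah(\wh,z_h)=\ah(u^I,z_h)\qquad\forall z_h\in\Kb .
\]
This problem is well posed by the coercivity of $\ah$. Set $\vh=u^I-\wh$. By construction $\vh\in\No$, so
\[
\inf_{\vh\in\No}\|u-\vh\|_1\le \|u-u^I\|_1+\|\wh\|_1 .
\]
The first term tends to zero by Lemma~\ref{le:interp} and the regularity~\eqref{eq:regu}. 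Hence $\mathbf{P2}$ reduces to showing that $\|\wh\|_1\to0$. Testing with $z_h=\wh$ and using~\eqref{eq:stabilization} together with Poincar\'e's inequality gives
\[
\|\wh\|_1^2\le C\,\ah(u^I,\wh).
\]
So the whole point is to show that $\ah(u^I,z_h)$ is small, uniformly in $z_h\in\Kb$ normalized in $H^1$.

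The key structural fact, which is where $k=1,2$ enters and which I expect to be the main obstacle, is the identity $\Pio\vh=\Pinabla\vh$ for all $\vh\in\VhkE$ when $k\le2$. In the enhanced space~\eqref{eq:localVEM}, the constraint $(\Pinabla\vh-\vh,q)_E=0$ holds for all monomials $q$ of degree $k$ and $k-1$. For $k=1$ this covers all of $\P_1(E)$. For $k=2$ it covers the degree $1$ and $2$ monomials, and the remaining constant moment is matched by the normalization $(\Pinabla\vh,1)_E=(\vh,1)_E$ in~\eqref{eq:pinabla}. Therefore $\vh-\Pinabla\vh$ is $L^2(E)$-orthogonal to all of $\P_k(E)$, i.e.\ $\Pio\vh=\Pinabla\vh$. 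Combined with the characterization~\eqref{eq:kbh-VEM}, this yields the following: every $z_h\in\Kb$ satisfies $\Pinabla z_h=0$ elementwise. For $k\ge3$ the moments of degree $\le k-2$ are free degrees of freedom, and this identification fails. This is why the argument is restricted to $k=1,2$.

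With this in hand, let $u_\pi$ be the piecewise polynomial of Lemma~\ref{le:pi-proj} and split
\[
\ah(u^I,z_h)=\sum_{E\in\T_h}\Big(\aEh(u^I-u_\pi,z_h)+\aEh(u_\pi,z_h)\Big).
\]
By the consistency~\eqref{eq:consistency} and the definition of $\Pinabla$,
\[
\aEh(u_\pi,z_h)=\aE(u_\pi,z_h)=\aE(u_\pi,\Pinabla z_h)=0 .
\]
For the first term, the continuity of $\aEh$ coming from~\eqref{eq:stabilization} gives
\[
\sum_{E\in\T_h}\aEh(u^I-u_\pi,z_h)\le C\big(|u-u^I|_{1,\Omega}+|u-u_\pi|_{h,\Omega}\big)\,|z_h|_{1,\Omega}.
\]
Taking $z_h=\wh$, we obtain
\[
\|\wh\|_1\le C\big(|u-u^I|_{1,\Omega}+|u-u_\pi|_{h,\Omega}\big)\le C h^{s}\|u\|_{1+s}.
\]
The last inequality uses Lemmas~\ref{le:pi-proj} and~\ref{le:interp}. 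The right-hand side tends to zero for every $u\in V_0$, which proves $\mathbf{P2}$ on arbitrary admissible polygonal meshes for $k=1,2$.
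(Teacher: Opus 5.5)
Your proof is correct and takes essentially the same route as the paper. Both arguments split the interpolant $a_h$-orthogonally along $\Kb$, use the identity $\Pio=\Pinabla$ for $k\le 2$ (so that $\Pinabla z_h=0$ for every $z_h\in\Kb$), and bound the kernel component by coercivity. The only difference is minor: the paper uses the explicit form of $a_h^E$ to reduce $a_h(v^I,v^0)$ to $\sum_E \SE((I-\Pinabla)v^I,v^0)$, which amounts to choosing $u_\pi=\Pinabla u^I$ in your argument, whereas you need only the abstract consistency and stability of $a_h^E$ with an arbitrary polynomial approximant, which is slightly more general.
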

\begin{proof}
Our proof is based on the property that  $\Pinabla\vh=\Pio\vh$ for $k=1,2$, as
it is observed in~\cite{enhanced}. In order to show this fact,
we recall the definition of the local virtual space $\VhkE$ as
in~\eqref{eq:localVEM}.
\[
\aligned
\VhkE=&\{\vh\in H^1(E): \vh\in C^0(\partial E),\ \vh|_e\in\P^k(e)\ \forall
e\in\partial E,\\
&\quad\Delta\vh\in\P^k(E),\ (\Pinabla\vh-\vh,q)_E=0\ \forall q\in\calM^{k,k-1}(E)\}.
\endaligned
\]
Therefore, for $k=1$, the definition of the local space implies that  
\[
(\Pinablau\vh-\vh,q)_E=0\quad\forall q\in\calM^{1,0}(E),
\]
which corresponds to the definition of $\Piou$ as the $L^2$-projection from
$L^2(E)$ to $\P^1(E)$. On the other hand, for $k=2$ the enhanced condition
reads
\[
(\Pinablad\vh-\vh,q)_E=0\quad\forall q\in\calM^{2,1}(E),
\]
moreover from~\eqref{eq:pinabla} we also have $(\Pinablad\vh-\vh,1)_E=0$. These
conditions ensure that $\Pinablad$ satisfies the same moment constraints as $\Piod$. 

To prove property $\mathbf{P2}$, for any $v\in V_0$ we will find an element
$\bar{v}\in\No$ such that $\|v-\bar{v}\|_1$ tends to zero as $h\to0$. We
remind that~\eqref{eq:regu} ensures that $V_0$, 
the space of the solution of the source equation~\eqref{eq:source}, is contained in $H^{1+s}(\Omega)$. 

Let $v\in V_0\subset H^{1+s}(\Omega)$, we can take $\bar{v}$ as the
interpolant $v^I\in\Vhk$ as defined in Lemma~\ref{le:interp}. 
We decompose $v^I$ into the sum $v^I=v^0+v^\perp$ with $v^0\in\Kb$ and $\ v^\perp\in\No$ 
defined as the following projections:
\begin{equation}
\label{eq:decomposition}
\aligned
& \ah(v^0,w) = \ah(v^I,w)\quad && \forall w\in\Kb\\
& \ah(v^\perp,w) = \ah(v^I,w)\quad && \forall w\in\No.
\endaligned
\end{equation}
Since $v^0\in\Kb$, then $\Pio v^0=0$ (see~\eqref{eq:kbh-VEM}) and also
$\Pinabla v^0=0$ for $k=1,2$. Hence we have 
\begin{equation}
\label{eq:ah-v0}
\aligned
\ah(v^0,\vh) & =\sum_{E\in\T_h} \left ( \aE(\Pinabla v^0,\Pinabla \vh) +
\SE((I-\Pinabla)v^0, (I-\Pinabla)\vh)\right ) \\
& = \sum_{E\in\T_h} \SE(v^0, (I-\Pinabla)\vh). 
\endaligned
\end{equation}

We now show that $\|v-v^\perp\|_1$ tends to zero as $h\to 0$. By triangular
inequality we have that 
\begin{equation}
\label{eq:triangular}
\|v-v^\perp\|_1 \le \| v-v^I\|_1 + \| v^0\|_1,
\end{equation}
hence, thanks to Lemma~\ref{le:interp}, it remains to show that $\| v^0\|_1$ tends to zero. 
Using the ellipticity of $\ah$~\eqref{eq:stabilization}, the decomposition of
$v^I$~\eqref{eq:decomposition}, 
and~\eqref{eq:ah-v0}, and the properties of the stabilization bilinear
form~\eqref{eq:stabilization-SE}, we obtain 
\begin{equation*}
\aligned
\alpha_* \|v^0\|_1^2 & \le \ah(v^0,v^0)  = \ah(v^I,v^0) =  \sum_{E\in\T_h} \SE((I-\Pinabla)v^I,v^0)\\
& \le  \sum_{E\in\T_h} \SE((I-\Pinabla)v^I,(I-\Pinabla)v^I)^{1/2} \SE(v^0,v^0)^{1/2}  \\
& \le  \overline{C}^2 \sum_{E\in\T_h} \|(I-\Pinabla)v^I\|_{1,E} \|v^0\|_{1,E} .\\
\endaligned
\end{equation*}

We estimate the term $\|(I-\Pinabla)v^I\|_{1,E}$, taking into account that
$\Pinabla$ is the elliptic projection onto $\P^k(E)$,
see~\eqref{eq:pinabla}. Therefore, for $v\in H^{1+s}(E)$, it holds that $\|v-\Pinabla v\|_{1,E}\le C h^s \|v\|_{1+s,E}$. 
Hence, using Lemma~\ref{le:interp}, we can write
$$
\aligned
\|(I-\Pinabla)v^I\|_{1,E} & \le \|v^I-v\|_{1,E} + \|v-\Pinabla v\|_{1,E} +\| \Pinabla(v-v^I)\|_{1,E} \\
& \le Ch^{s}\|v\|_{1+s,E}.
\endaligned
$$
Summing on $E\in\T_h$, we obtain that
$$
\|v^0\|_1\le Ch^s \|v\|_{1+s},
$$
which, together with~\eqref{eq:triangular}, implies property $\mathbf{P2}$.

\end{proof}

\section{Numerical results}
\label{se:num}
This section is devoted to the confirmation of  the theoretical results
obtained previously and to numerically investigate whether property
$\mathbf{P2}$ holds also for virtual element approximation of order higher
than $k=2$.
For the computations, we use the Matlab code developed by the team
at the Department of Mathematics and Applications (University of
Milano-Bicocca), within the ERC consolidator grant CAVE (Challenges and
Advancement in Virtual Elements)~\cite{CAVE}.
We remark that the efficient resolution of the algebraic eigenvalue problem is
not a scope of the present paper.

Let $\Omega$ be the unit square, and let us consider the Laplace eigenvalue problem: 
$$
\aligned
& -\Delta u = \lambda u  \quad && \text{in } \Omega \\
& u = 0 && \text{on } \partial\Omega.
\endaligned
$$
In this case, the exact eigensolutions are well-know and, for positive $i,j\in\mathbb{N}$, are given by
$$
\lambda_{ij} = (i^2 + j^2)\pi^2 , \quad u_{ij} = \sin(i\pi x )\sin(j\pi y).
$$

For readability reasons, in the tables reporting the numerical results, we shall display the value of the computed eigenvalues divided by $\pi^2$. 

\begin{figure}[h]
\begin{center}
\subfigure[$\mathcal{T}$]
{
\includegraphics[width=0.22\textwidth]{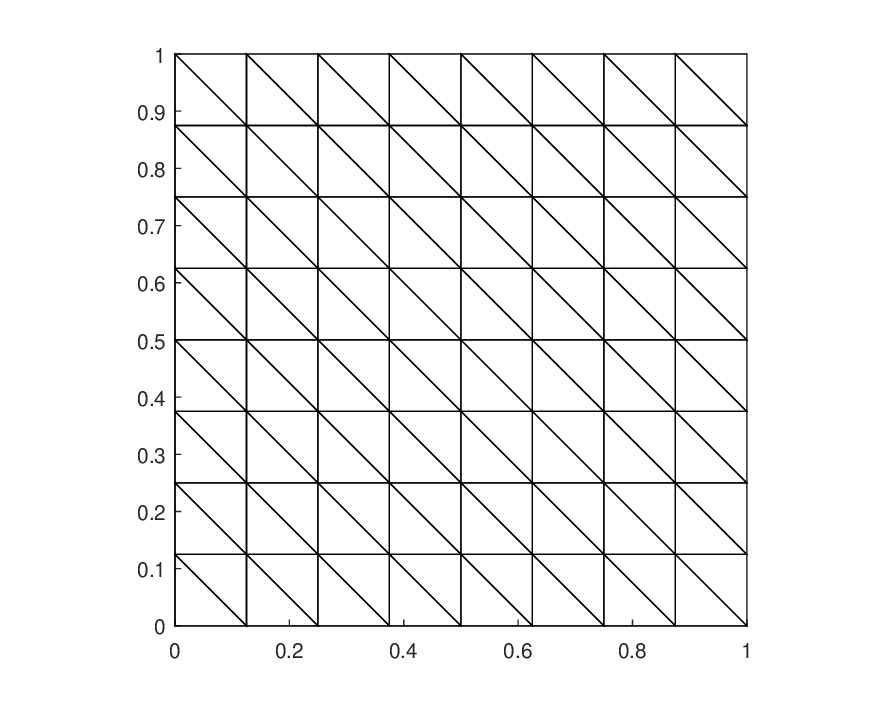}   
}
\subfigure[$\mathcal{S}$]
{
\includegraphics[width=0.22\textwidth]{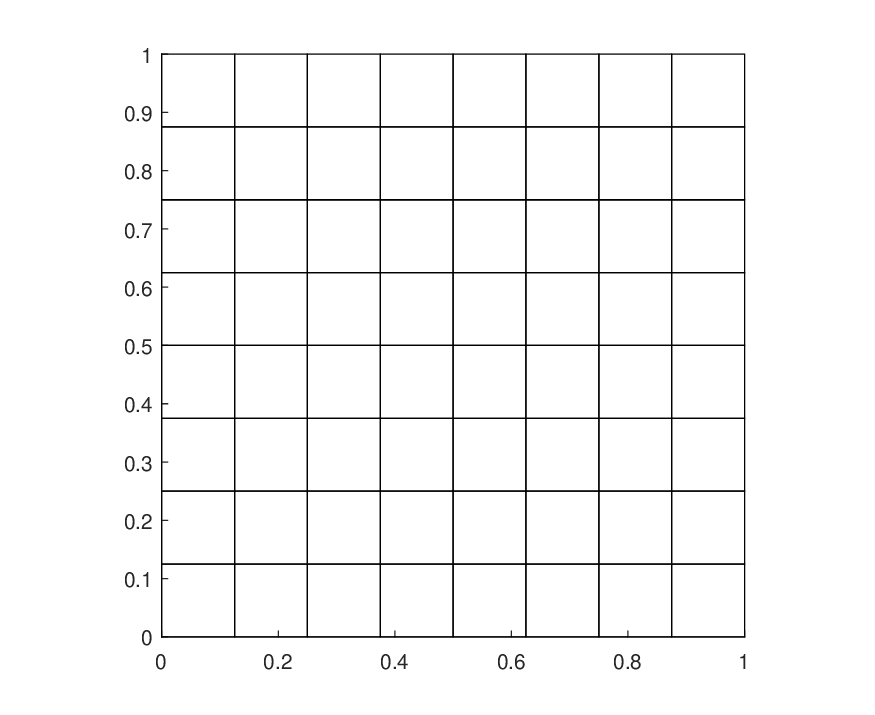} 
}
\subfigure[$\mathcal{V}$]
{
\includegraphics[width=0.22\textwidth]{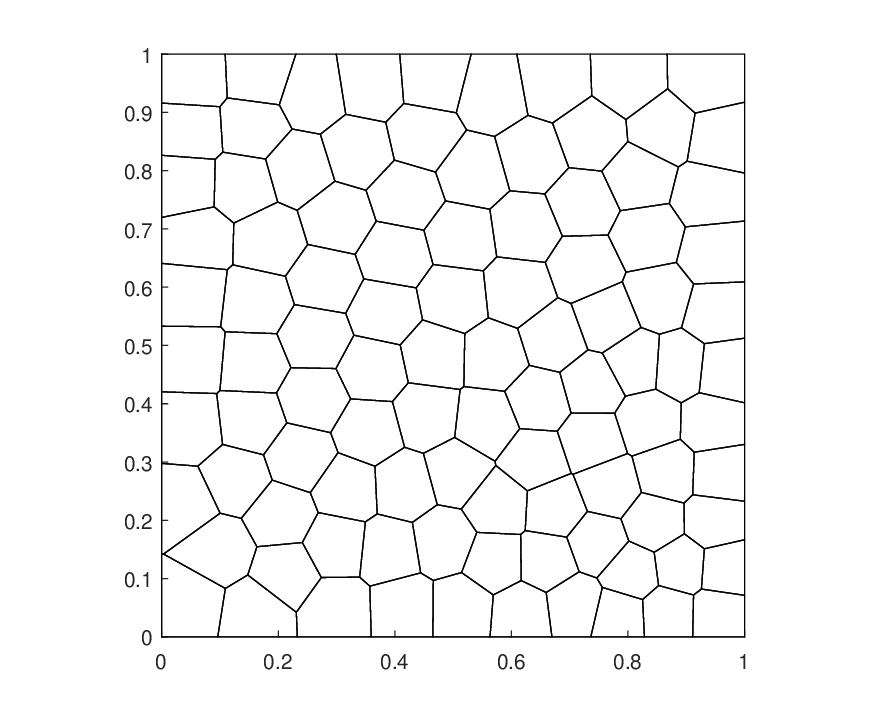}
}
\subfigure[$\mathcal{H}$]
{
\includegraphics[width=0.24\textwidth]{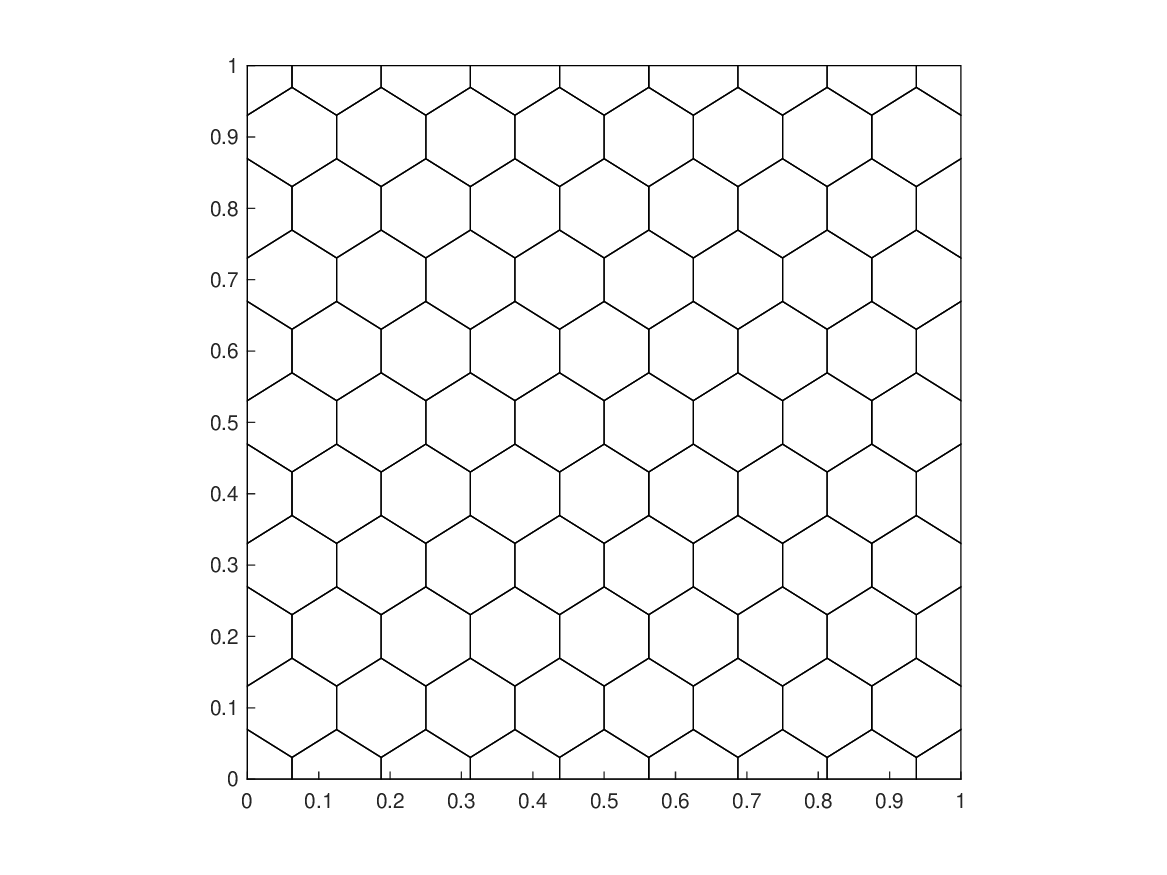} 
}
\end{center}
\caption{Coarsest meshes of different types:  $\mathcal{T}$ triangles,
$\mathcal{S}$ squares, $\mathcal{V}$ Voronoi, $\mathcal{H}$ hexagons}
\label{fg:mesh}
\end{figure}

In the virtual element discretization we employ four different mesh types,
labeled by $\mathcal{T}, \mathcal{S}, \mathcal{V}, \mathcal{H}$ and reported in
Figure~\ref{fg:mesh}. In particular, meshes $\mathcal{T}$ and $\mathcal{S}$ are
standard uniform meshes made of triangles and squares obtained subdividing each
edge of the square into $N$ parts. For the triangular mesh, the squares
are further subdivided into two triangles.
The mesh $ \mathcal{V}$ is the so-called Voronoi mesh made of $P$ polygons with
possibly different number of edges. Finally, the mesh $\mathcal{H}$ is obtained 
by using mainly hexagons except for boundary elements. In this case the
refinement level of the mesh is indicated by \texttt{n\,x\,m} where \texttt{n}
and \texttt{m} stand for the number of hexagons along the horizontal and the
vertical direction, respectively. The refinement level of the meshes will be
denoted by $N$, $N$, $P$, and \texttt{m}, respectively. The rates of
convergence are computed with respect to the maximum diameter of the
elements $h$.

The algebraic eigenvalue problem associated with the virtual element
approximation of our model problem (see~\eqref{eq:Poissonh}) is
\begin{equation}
\label{eq:matrixform}
\m{A}\m{x}=\m{\lambda}\m{Bx},
\end{equation}
where $\m{A}$ and $\m{B}$ are the matrices associated with the
bilinear forms $a_h$ and $b_h$, respectively. The matrix $\m{A}$ is stabilized
as explained in~\eqref{eq:aEh}, while $\m{B}$ is not stabilized.
In the numerical tests, the local stabilized bilinear
form $S^E$ is taken as the so-called \emph{dofi-dofi}, obtained using the
vector of the degrees of freedom, that is
\[
S^E(\uh,\vh)=\alpha\sum_{i=1}^{N_E}\m{u}_i\m{v}_i,
\]
where $N_E$ is the dimension of the local VEM space and $\m{u}_i$ are
components of the vector containing the degrees of freedom of $\uh$
and $\alpha\in\RE$ is a positive number. In our numerical experiments, we
fix $\alpha=1$. 


We aim at identifying the kernel $\Kb$ and at checking the convergence of the
computed eigenvalues in order to confirm the theoretical findings. 

First of all, we evaluate the dimension of the kernel of $\m{B}$ by computing
the rank of the matrix. This can be easily done in Matlab for symmetric and
positive semidefinite matrices in sparse form with the command \texttt{chol}.
The dimension of the kernel $\Kb$ for different types of mesh is reported in
the corresponding tables organized as follows. 
The columns contain the value of the dimension of the kernels 
with respect to the degree of the polynomial used in the definition of the
virtual element space. Within
parenthesis, we display the dimension of the matrix. The rows 
refer to the level of refinement of the mesh.
For each type of mesh and  degree of the VEM spaces, we report also the first ten eigenvalues and the corresponding rates of convergence 
in the case of uniform refinement of the mesh. 

\subsection{Triangular and square meshes}
\label{sec:ts}

\begin{table}
\begin{center}
\caption{Dimension of the kernel of matrix $\m{B}$ for triangular mesh
$\mathcal{T}$}
\label{tb:kernelT}
\begin{tabular}{r|l r| l r| l r| l r }
$\mathcal{T}$ &\multicolumn{6}{c}{ $\dim(\Kb)$ ($\dim(V_h)$)}\\[2pt]
\hline
N& \multicolumn{2}{c}{$k=1$} & \multicolumn{2}{c}{$k=2$} &
\multicolumn{2}{c}{$k=3$}& \multicolumn{2}{c}{$k=4$}\\[2pt]
\hline
 4 & 0 &  (9)    & 0 &  (81)    & 0 &  (185)    & 0 & (321)\\
 8 & 0 &  (49)   & 0 &  (353)   & 0 &  (785)    & 0 & (1345)\\
16 & 0 &  (225)  & 0 &  (1473)  & 0 &  (3233)   & 0 & (5505)\\
32 & 0 &  (961)  & 0 &  (6017)  & 0 &  (13121)  & 0 & (22273)\\
64 & 0 &  (3969) & 0 &  (24321) & 0 &  (52865)  & 0& (89601)\\[2pt]
\hline
\end{tabular}
\end{center}
\end{table}
\begin{table}[h]
\begin{center}
\caption{Dimension of the kernel of matrix $\m{B}$ for square mesh
$\mathcal{S}$}
\label{tb:kernelS}
\begin{tabular}{r|l r| l r| l r| r r}
$\mathcal{S}$ &\multicolumn{6}{c}{ $\dim(\Kb)$ ($\dim(V_h)$)}\\[2pt]
\hline
N& \multicolumn{2}{c}{$k=1$} & \multicolumn{2}{c}{$k=2$} &
\multicolumn{2}{c}{$k=3$}& \multicolumn{2}{c}{$k=4$}\\[2pt] 
\hline
 4 & 0 &  (9)    & 0 &  (49)    & 0 &  (105)    & 0 & (177)\\
 8 & 0 &  (49)   & 0 &  (225)   & 0 &  (465)    & 0 & (769)\\
16 & 0 &  (225)  & 0 &  (961)   & 0 &  (1953)   & 0 & (3201)\\
32 & 0 &  (961)  & 0 &  (3969)  & 0 &  (8001)   & 0 & (13057)\\
64 & 0 &  (3969) & 0 &  (16129) & 0 &  (32385)  & 43 & (52737)\\[2pt]
\hline
\end{tabular}
\end{center}
\end{table}

All cases of triangular meshes reported in Table~\ref{tb:kernelT} have
positive definite matrices $\m{B}$, while the last case in
Table~\ref{tb:kernelS} (mesh of squares for $N=64$ and $k=4$) shows a
nontrivial kernel. Errors and rates of convergence for the first 10
eigenvalues with $k=1,\dots,4$ are reported in
Tables~\ref{tb:tk1}-\ref{tb:tk4} for triangular meshes, and in
Tables~\ref{tb:sk1}-\ref{tb:sk4} for square meshes, confirming the correct
spectral approximation and the optimal order of convergence.

\begin{table}
\caption{ First 10 eigenvalues on $\mathcal{T}$ with $k=1$ }
\label{tb:tk1}
\begin{tabular}{r|rrrrr}
\hline
Exact&\multicolumn{5}{c}{Errors (rate)}\\[3pt]
\hline
   2 & 3.2e-01 & 7.8e-02 (2.03) & 1.9e-02 (2.01) & 4.8e-03 (2.01) & 1.2e-03 (2.00) \\
   5 & 1.3e+00 & 3.3e-01 (2.01) & 8.3e-02 (2.00) & 2.1e-02 (2.00) & 5.2e-03 (1.99) \\
   5 & 2.3e+00 & 5.3e-01 (2.08) & 1.3e-01 (2.03) & 3.2e-02 (2.01) & 8.1e-03 (2.00) \\
   8 & 4.2e+00 & 1.2e+00 (1.83) & 3.1e-01 (1.95) & 7.7e-02 (1.99) & 1.9e-02 (1.99) \\
  10 & 5.6e+00 & 1.5e+00 (1.84) & 3.8e-01 (2.02) & 9.5e-02 (2.01) & 2.4e-02 (2.00) \\
  10 & 6.8e+00 & 1.7e+00 (2.00) & 3.9e-01 (2.11) & 9.5e-02 (2.03) & 2.4e-02 (2.01) \\
  13 & 7.9e+00 & 2.2e+00 (1.83) & 5.7e-01 (1.96) & 1.4e-01 (1.99) & 3.6e-02 (2.00) \\
  13 & 1.3e+01 & 4.0e+00 (1.71) & 9.8e-01 (2.03) & 2.4e-01 (2.01) & 6.1e-02 (2.00) \\
  17 & 1.5e+01 & 4.3e+00 (1.83) & 1.0e+00 (2.06) & 2.6e-01 (2.02) & 6.4e-02 (2.01) \\
  17 & 8.3e+01 & 4.6e+00 (4.18) & 1.1e+00 (2.10) & 2.6e-01 (2.03) & 6.5e-02 (2.01) \\
\hline
$N$ &\multicolumn{1}{c}{4} &\multicolumn{1}{c}{8}&\multicolumn{1}{c}{16} &\multicolumn{1}{c}{32} &\multicolumn{1}{c}{64} \\
\hline
\end{tabular}
\end{table}
\begin{table}
\caption{ First 10 eigenvalues on $\mathcal{T}$ with $k=1$ }
\label{tb:tk2}
\begin{tabular}{r|rrrrr}
\hline
Exact&\multicolumn{5}{c}{Errors (rate)}\\[3pt]
\hline
   2 & 3.2e-01 & 7.8e-02 (2.03) & 1.9e-02 (2.01) & 4.8e-03 (2.01) & 1.2e-03 (2.00) \\
   5 & 1.3e+00 & 3.3e-01 (2.01) & 8.3e-02 (2.00) & 2.1e-02 (2.00) & 5.2e-03 (1.99) \\
   5 & 2.3e+00 & 5.3e-01 (2.08) & 1.3e-01 (2.03) & 3.2e-02 (2.01) & 8.1e-03 (2.00) \\
   8 & 4.2e+00 & 1.2e+00 (1.83) & 3.1e-01 (1.95) & 7.7e-02 (1.99) & 1.9e-02 (1.99) \\
  10 & 5.6e+00 & 1.5e+00 (1.84) & 3.8e-01 (2.02) & 9.5e-02 (2.01) & 2.4e-02 (2.00) \\
  10 & 6.8e+00 & 1.7e+00 (2.00) & 3.9e-01 (2.11) & 9.5e-02 (2.03) & 2.4e-02 (2.01) \\
  13 & 7.9e+00 & 2.2e+00 (1.83) & 5.7e-01 (1.96) & 1.4e-01 (1.99) & 3.6e-02 (2.00) \\
  13 & 1.3e+01 & 4.0e+00 (1.71) & 9.8e-01 (2.03) & 2.4e-01 (2.01) & 6.1e-02 (2.00) \\
  17 & 1.5e+01 & 4.3e+00 (1.83) & 1.0e+00 (2.06) & 2.6e-01 (2.02) & 6.4e-02 (2.01) \\
  17 & 8.3e+01 & 4.6e+00 (4.18) & 1.1e+00 (2.10) & 2.6e-01 (2.03) & 6.5e-02 (2.01) \\
\hline
$N$ &\multicolumn{1}{c}{4} &\multicolumn{1}{c}{8}&\multicolumn{1}{c}{16} &\multicolumn{1}{c}{32} &\multicolumn{1}{c}{64} \\
\hline
\end{tabular}
\end{table}
\begin{table}
\caption{ First 10 eigenvalues on $\mathcal{T}$ with $k=3$ }
\label{tb:tk3}
\begin{tabular}{r|rrrrr}
\hline
Exact&\multicolumn{5}{c}{Errors (rate)}\\[3pt]
\hline
   2 & 6.5e-05 & 1.0e-06 (6.01) & 1.6e-08 (6.02) & 2.4e-10 (6.01) & 3.9e-12 (5.94) \\
   5 & 1.1e-03 & 1.8e-05 (5.93) & 2.8e-07 (6.02) & 4.3e-09 (6.02) & 6.8e-11 (5.99) \\
   5 & 2.2e-03 & 3.6e-05 (5.93) & 5.5e-07 (6.01) & 8.5e-09 (6.01) & 1.3e-10 (6.03) \\
   8 & 1.4e-02 & 2.6e-04 (5.75) & 4.0e-06 (6.00) & 6.2e-08 (6.01) & 9.6e-10 (6.01) \\
  10 & 1.6e-02 & 2.9e-04 (5.79) & 4.6e-06 (6.00) & 7.1e-08 (6.01) & 1.1e-09 (6.01) \\
  10 & 1.6e-02 & 3.0e-04 (5.79) & 4.6e-06 (6.01) & 7.1e-08 (6.01) & 1.1e-09 (6.01) \\
  13 & 4.4e-02 & 9.4e-04 (5.56) & 1.5e-05 (5.97) & 2.3e-07 (6.02) & 3.6e-09 (6.01) \\
  13 & 9.1e-02 & 2.1e-03 (5.46) & 3.3e-05 (5.97) & 5.1e-07 (6.01) & 8.0e-09 (6.01) \\
  17 & 1.4e-01 & 1.9e-03 (6.18) & 3.1e-05 (5.98) & 4.7e-07 (6.01) & 7.4e-09 (6.01) \\
  17 & 1.4e-01 & 2.0e-03 (6.13) & 3.2e-05 (5.98) & 4.9e-07 (6.01) & 7.7e-09 (6.00) \\
\hline
$N$ &\multicolumn{1}{c}{4} &\multicolumn{1}{c}{8}&\multicolumn{1}{c}{16} &\multicolumn{1}{c}{32} &\multicolumn{1}{c}{64} \\
\hline
\end{tabular}
\end{table}
\begin{table}
\caption{ First 10 eigenvalues on $\mathcal{T}$ with $k=4$ }
\label{tb:tk4}
\begin{tabular}{r|rrrrr}
\hline
Exact&\multicolumn{5}{c}{Errors (rate)}\\[3pt]
\hline
   2 & 4.9e-07 & 2.0e-09 (7.94) & 9.5e-12 (7.71) & 3.2e-12 (1.55) & 9.9e-12 (-1.61) \\
   5 & 1.4e-05 & 6.2e-08 (7.77) & 2.5e-10 (7.94) & 2.2e-12 (6.87) & 6.6e-12 (-1.60) \\
   5 & 3.8e-05 & 1.6e-07 (7.90) & 6.3e-10 (7.97) & 3.7e-12 (7.42) & 1.3e-11 (-1.78) \\
   8 & 4.1e-04 & 1.9e-06 (7.75) & 7.9e-09 (7.92) & 3.2e-11 (7.93) & 1.7e-12 (4.23) \\
  10 & 4.2e-04 & 1.9e-06 (7.76) & 7.8e-09 (7.94) & 3.0e-11 (8.04) & 5.9e-12 (2.34) \\
  10 & 4.2e-04 & 1.9e-06 (7.77) & 7.8e-09 (7.94) & 3.1e-11 (8.00) & 9.3e-12 (1.72) \\
  13 & 1.7e-03 & 9.8e-06 (7.46) & 4.2e-08 (7.87) & 1.7e-10 (7.95) & 3.1e-12 (5.77) \\
  13 & 5.0e-03 & 2.4e-05 (7.70) & 1.0e-07 (7.89) & 4.1e-10 (7.97) & 3.2e-13 (10.33) \\
  17 & 2.2e-03 & 1.7e-05 (7.01) & 7.2e-08 (7.88) & 2.9e-10 (7.98) & 2.1e-12 (7.08) \\
  17 & 3.0e-03 & 1.9e-05 (7.30) & 7.9e-08 (7.90) & 3.1e-10 (7.99) & 1.0e-11 (4.91) \\
\hline
$N$ &\multicolumn{1}{c}{4} &\multicolumn{1}{c}{8}&\multicolumn{1}{c}{16} &\multicolumn{1}{c}{32} &\multicolumn{1}{c}{64} \\
\hline
\end{tabular}
\end{table}
\begin{table}
\caption{ First 10 eigenvalues on $\mathcal{S}$ with $k=1$ }
\label{tb:sk1}
\begin{tabular}{r|rrrrr}
\hline
Exact&\multicolumn{5}{c}{Errors (rate)}\\[3pt]
\hline
   2 & 1.7e-01 & 3.9e-02 (2.09) & 9.7e-03 (2.02) & 2.4e-03 (2.01) & 6.0e-04 (2.00) \\
   5 & 1.3e+00 & 2.8e-01 (2.17) & 6.8e-02 (2.04) & 1.7e-02 (2.01) & 4.2e-03 (2.01) \\
   5 & 1.3e+00 & 2.8e-01 (2.17) & 6.8e-02 (2.04) & 1.7e-02 (2.01) & 4.2e-03 (2.01) \\
   8 & 3.7e+00 & 6.7e-01 (2.45) & 1.6e-01 (2.09) & 3.9e-02 (2.02) & 9.7e-03 (2.00) \\
  10 & 5.1e+00 & 1.2e+00 (2.04) & 3.0e-01 (2.06) & 7.3e-02 (2.02) & 1.8e-02 (2.00) \\
  10 & 5.1e+00 & 1.2e+00 (2.04) & 3.0e-01 (2.06) & 7.3e-02 (2.02) & 1.8e-02 (2.00) \\
  13 & 1.2e+01 & 1.9e+00 (2.64) & 4.4e-01 (2.14) & 1.1e-01 (2.03) & 2.7e-02 (2.01) \\
  13 & 1.2e+01 & 1.9e+00 (2.64) & 4.4e-01 (2.14) & 1.1e-01 (2.03) & 2.7e-02 (2.01) \\
  17 & 4.4e+01 & 3.8e+00 (3.52) & 9.0e-01 (2.08) & 2.2e-01 (2.03) & 5.5e-02 (2.01) \\
  17 & 8.3e+01 & 3.8e+00 (4.45) & 9.0e-01 (2.08) & 2.2e-01 (2.03) & 5.5e-02 (2.01) \\
\hline
$N$ &\multicolumn{1}{c}{4} &\multicolumn{1}{c}{8}&\multicolumn{1}{c}{16} &\multicolumn{1}{c}{32} &\multicolumn{1}{c}{64} \\
\hline
\end{tabular}
\end{table}
\begin{table}
\caption{ First 10 eigenvalues on $\mathcal{S}$ with $k=2$ }
\label{tb:sk2}
\begin{tabular}{r|rrrrr}
\hline
Exact&\multicolumn{5}{c}{Errors (rate)}\\[3pt]
\hline
   2 & 7.6e-04 & 4.3e-05 (4.14) & 2.6e-06 (4.04) & 1.6e-07 (4.01) & 1.0e-08 (4.00) \\
   5 & 3.0e-02 & 1.9e-03 (3.97) & 1.2e-04 (3.99) & 7.4e-06 (4.00) & 4.6e-07 (4.00) \\
   5 & 3.0e-02 & 1.9e-03 (3.97) & 1.2e-04 (3.99) & 7.4e-06 (4.00) & 4.6e-07 (4.00) \\
   8 & 6.3e-02 & 3.0e-03 (4.38) & 1.7e-04 (4.14) & 1.0e-05 (4.04) & 6.5e-07 (4.01) \\
  10 & 3.1e-01 & 2.2e-02 (3.81) & 1.4e-03 (3.95) & 9.0e-05 (3.99) & 5.6e-06 (4.00) \\
  10 & 3.1e-01 & 2.2e-02 (3.81) & 1.4e-03 (3.95) & 9.0e-05 (3.99) & 5.6e-06 (4.00) \\
  13 & 3.9e-01 & 2.2e-02 (4.16) & 1.3e-03 (4.07) & 8.0e-05 (4.02) & 5.0e-06 (4.00) \\
  13 & 3.9e-01 & 2.2e-02 (4.16) & 1.3e-03 (4.07) & 8.0e-05 (4.02) & 5.0e-06 (4.00) \\
  17 & 3.0e-01 & 1.2e-01 (1.31) & 8.0e-03 (3.89) & 5.1e-04 (3.97) & 3.2e-05 (3.99) \\
  17 & 3.0e-01 & 1.2e-01 (1.31) & 8.0e-03 (3.89) & 5.1e-04 (3.97) & 3.2e-05 (3.99) \\
\hline
$N$ &\multicolumn{1}{c}{4} &\multicolumn{1}{c}{8}&\multicolumn{1}{c}{16} &\multicolumn{1}{c}{32} &\multicolumn{1}{c}{64} \\
\hline
\end{tabular}
\end{table}
\begin{table}
\caption{ First 10 eigenvalues on square mesh $\mathcal{S}$ with $k=3$ }
\label{tb:sk3}
\begin{tabular}{r|rrrrr}
\hline
Exact&\multicolumn{5}{c}{Errors (rate)}\\[3pt]
\hline
   2 & 6.5e-05 & 1.1e-06 (5.91) & 1.7e-08 (5.98) & 2.7e-10 (6.00) & 3.0e-12 (6.51) \\
   5 & 1.4e-03 & 2.8e-05 (5.62) & 4.6e-07 (5.91) & 7.3e-09 (5.98) & 1.1e-10 (6.02) \\
   5 & 1.4e-03 & 2.8e-05 (5.62) & 4.6e-07 (5.91) & 7.3e-09 (5.98) & 1.1e-10 (6.01) \\
   8 & 1.3e-02 & 2.6e-04 (5.64) & 4.4e-06 (5.91) & 6.9e-08 (5.98) & 1.1e-09 (6.00) \\
  10 & 1.2e-02 & 3.3e-04 (5.21) & 5.8e-06 (5.82) & 9.4e-08 (5.96) & 1.5e-09 (5.99) \\
  10 & 1.2e-02 & 3.3e-04 (5.21) & 5.8e-06 (5.82) & 9.4e-08 (5.96) & 1.5e-09 (5.99) \\
  13 & 5.3e-02 & 1.4e-03 (5.23) & 2.5e-05 (5.81) & 4.1e-07 (5.95) & 6.4e-09 (5.99) \\
  13 & 5.3e-02 & 1.4e-03 (5.23) & 2.5e-05 (5.81) & 4.1e-07 (5.95) & 6.4e-09 (5.99) \\
  17 & 2.1e-01 & 2.4e-03 (6.45) & 4.5e-05 (5.74) & 7.4e-07 (5.94) & 1.2e-08 (5.99) \\
  17 & 2.1e-01 & 2.4e-03 (6.45) & 4.5e-05 (5.74) & 7.4e-07 (5.94) & 1.2e-08 (5.99) \\
\hline
$N$ &\multicolumn{1}{c}{4} &\multicolumn{1}{c}{8}&\multicolumn{1}{c}{16} &\multicolumn{1}{c}{32} &\multicolumn{1}{c}{64} \\
\hline
\end{tabular}
\end{table}
\begin{table}
\caption{ First 10 eigenvalues on square mesh $\mathcal{S}$ with $k=4$ }
\label{tb:sk4}
\begin{tabular}{r|rrrrr}
\hline
Exact&\multicolumn{5}{c}{Errors (rate)}\\[3pt]
\hline
   2 & 5.2e-07 & 2.3e-09 (7.83) & 8.8e-12 (8.02) & 7.4e-13 (3.57) & 1.5e-11 (-4.37) \\
   5 & 3.5e-05 & 1.7e-07 (7.69) & 7.0e-10 (7.92) & 1.9e-12 (8.55) & 3.8e-12 (-1.02) \\
   5 & 3.5e-05 & 1.7e-07 (7.69) & 7.0e-10 (7.92) & 5.0e-12 (7.13) & 2.1e-11 (-2.09) \\
   8 & 3.1e-04 & 2.1e-06 (7.23) & 9.1e-09 (7.83) & 3.7e-11 (7.94) & 8.5e-12 (2.12) \\
  10 & 7.9e-04 & 4.0e-06 (7.63) & 1.7e-08 (7.90) & 6.7e-11 (7.97) & 9.1e-12 (2.87) \\
  10 & 7.9e-04 & 4.0e-06 (7.63) & 1.7e-08 (7.90) & 6.7e-11 (7.96) & 1.3e-11 (2.34) \\
  13 & 2.3e-03 & 2.0e-05 (6.84) & 9.3e-08 (7.75) & 3.8e-10 (7.94) & 3.2e-12 (6.89) \\
  13 & 2.3e-03 & 2.0e-05 (6.84) & 9.3e-08 (7.75) & 3.8e-10 (7.94) & 1.3e-11 (4.85) \\
  17 & 1.0e-02 & 4.5e-05 (7.81) & 1.9e-07 (7.88) & 7.7e-10 (7.97) & 1.1e-11 (6.14) \\
  17 & 1.0e-02 & 4.5e-05 (7.81) & 1.9e-07 (7.88) & 7.7e-10 (7.97) & 1.7e-11 (5.53) \\
\hline
$N$ &\multicolumn{1}{c}{4} &\multicolumn{1}{c}{8}&\multicolumn{1}{c}{16} &\multicolumn{1}{c}{32} &\multicolumn{1}{c}{64} \\
\hline
\end{tabular}
\end{table}

\subsection{Voronoi meshes}
\label{sec:voronoi}

\begin{table}
\begin{center}
\caption{Dimension of the kernel of matrix $\m{B}$ for Voronoi mesh
$\mathcal{V}$}
\label{tb:kernelV}
\begin{tabular}{r|r r| r r| r r| r r}
$\mathcal{V}$ &\multicolumn{6}{c}{ $\dim(\Kb)$ ($\dim(V_h)$)}\\[2pt]
\hline
$P$& \multicolumn{2}{c}{$k=1$} & \multicolumn{2}{c}{$k=2$} &
\multicolumn{2}{c}{$k=3$}& \multicolumn{2}{c}{$k=4$}\\[2pt]
\hline
 50 & 0 &  (73)   & 0 &  (245)  & 0    & (467)   & 5    & (739)\\
100 & 0 &  (163)  & 0 &  (525)  & 1    & (987)   & 57   & (1549)\\
200 & 0 &  (346)  & 0 &  (1091  & 43   & (2036)  & 188  & (3181)\\
400 & 0 &  (727)  & 0 &  (2253) & 182  & (4179)  & 512  & (6505)\\
800 & 0 &  (1500) & 0 &  (4599) & 504  & (8498)  & 1212 & (13197)\\[2pt]
\hline
\end{tabular}
\end{center}
\end{table}

For Voronoi mesh, the rank of the matrix $\m{B}$ is reported
in Table~\ref{tb:kernelV}. In this case, we have that for $k=1$ and $k=2$ the
kernel is again reduced to $\{0\}$, while for $k>2$ this is not true anymore.
Although the cases $k>2$ are not covered by our theory, from
Tables~\ref{tb:vk1}-\ref{tb:vk4} we see that the rate of convergence is still
optimal and no spurious eigenvalues appear in the spectrum.
\begin{table}
\caption{ First 10 eigenvalues on $\mathcal{V}$ with $k=1$ }
\label{tb:vk1}
\begin{tabular}{r|rrrrr}
\hline
Exact&\multicolumn{5}{c}{Errors (rate)}\\[3pt]
\hline
   2 & 5.2e-02 & 2.3e-02 (2.27) & 1.0e-02 (3.12) & 5.4e-03 (2.36) & 2.5e-03 (2.02) \\
   5 & 2.4e-01 & 1.3e-01 (1.69) & 6.3e-02 (2.96) & 3.1e-02 (2.52) & 1.6e-02 (1.76) \\
   5 & 3.4e-01 & 1.5e-01 (2.37) & 6.8e-02 (3.02) & 3.2e-02 (2.64) & 1.6e-02 (1.84) \\
   8 & 7.6e-01 & 3.7e-01 (2.01) & 1.7e-01 (2.97) & 8.6e-02 (2.51) & 4.2e-02 (1.86) \\
  10 & 1.0e+00 & 5.3e-01 (1.83) & 2.5e-01 (2.91) & 1.3e-01 (2.41) & 6.1e-02 (1.96) \\
  10 & 1.3e+00 & 5.6e-01 (2.41) & 2.6e-01 (2.99) & 1.3e-01 (2.46) & 6.4e-02 (1.88) \\
  13 & 1.8e+00 & 9.3e-01 (1.80) & 4.4e-01 (2.93) & 2.2e-01 (2.46) & 1.1e-01 (1.84) \\
  13 & 2.3e+00 & 1.0e+00 (2.30) & 4.5e-01 (3.12) & 2.3e-01 (2.36) & 1.1e-01 (1.95) \\
  17 & 2.8e+00 & 1.6e+00 (1.69) & 7.5e-01 (2.89) & 3.6e-01 (2.55) & 1.8e-01 (1.82) \\
  17 & 3.9e+00 & 1.6e+00 (2.55) & 7.6e-01 (2.85) & 3.7e-01 (2.53) & 1.8e-01 (1.85) \\
\hline
$P$ &\multicolumn{1}{c}{50} &\multicolumn{1}{c}{100}&\multicolumn{1}{c}{200} &\multicolumn{1}{c}{400} &\multicolumn{1}{c}{800} \\
\hline
\end{tabular}
\end{table}

\begin{table}
\caption{ First 10 eigenvalues on $\mathcal{V}$ with $k=2$ }
\label{tb:vk2}
\begin{tabular}{r|rrrrr}
\hline
Exact&\multicolumn{5}{c}{Errors (rate)}\\[3pt]
\hline
   2 & 2.4e-04 & 6.8e-05 (3.53) & 1.9e-05 (5.02) & 4.7e-06 (4.99) & 1.3e-06 (3.43) \\
   5 & 4.1e-03 & 1.1e-03 (3.57) & 3.2e-04 (5.04) & 7.5e-05 (5.10) & 1.9e-05 (3.57) \\
   5 & 5.7e-03 & 1.4e-03 (3.91) & 3.3e-04 (5.74) & 8.2e-05 (4.91) & 2.0e-05 (3.72) \\
   8 & 1.5e-02 & 4.3e-03 (3.41) & 1.1e-03 (5.25) & 3.1e-04 (4.60) & 7.9e-05 (3.58) \\
  10 & 3.5e-02 & 1.1e-02 (3.26) & 2.5e-03 (5.77) & 6.1e-04 (4.98) & 1.6e-04 (3.58) \\
  10 & 5.0e-02 & 1.1e-02 (4.13) & 2.7e-03 (5.63) & 6.7e-04 (4.99) & 1.6e-04 (3.75) \\
  13 & 5.8e-02 & 1.7e-02 (3.44) & 4.9e-03 (4.84) & 1.3e-03 (4.76) & 3.4e-04 (3.51) \\
  13 & 8.2e-02 & 2.1e-02 (3.79) & 5.0e-03 (5.64) & 1.4e-03 (4.44) & 3.5e-04 (3.70) \\
  17 & 1.7e-01 & 5.2e-02 (3.31) & 1.2e-02 (5.76) & 3.1e-03 (4.82) & 7.6e-04 (3.68) \\
  17 & 2.3e-01 & 5.4e-02 (4.06) & 1.3e-02 (5.56) & 3.2e-03 (5.06) & 7.9e-04 (3.64) \\
\hline
$P$ &\multicolumn{1}{c}{50} &\multicolumn{1}{c}{100}&\multicolumn{1}{c}{200} &\multicolumn{1}{c}{400} &\multicolumn{1}{c}{800} \\
\hline
\end{tabular}
\end{table}

\begin{table}
\caption{ First 10 eigenvalues on $\mathcal{V}$ with $k=3$ }
\label{tb:vk3}
\begin{tabular}{r|rrrrr}
\hline
Exact&\multicolumn{5}{c}{Errors (rate)}\\[3pt]
\hline
   2 & 2.7e-06 & 2.6e-07 (6.56) & 3.0e-08 (8.51) & 4.5e-09 (6.73) & 5.4e-10 (5.55) \\
   5 & 6.2e-05 & 8.7e-06 (5.45) & 1.1e-06 (8.27) & 1.3e-07 (7.47) & 1.7e-08 (5.31) \\
   5 & 8.9e-05 & 9.7e-06 (6.17) & 1.1e-06 (8.54) & 1.5e-07 (7.07) & 1.8e-08 (5.57) \\
   8 & 5.3e-04 & 7.3e-05 (5.52) & 8.1e-06 (8.69) & 1.0e-06 (7.41) & 1.3e-07 (5.39) \\
  10 & 9.0e-04 & 1.3e-04 (5.43) & 1.5e-05 (8.39) & 2.0e-06 (7.27) & 2.4e-07 (5.53) \\
  10 & 1.2e-03 & 1.4e-04 (6.10) & 1.6e-05 (8.43) & 2.2e-06 (7.05) & 2.7e-07 (5.54) \\
  13 & 2.8e-03 & 4.0e-04 (5.43) & 5.0e-05 (8.21) & 6.2e-06 (7.39) & 8.3e-07 (5.28) \\
  13 & 3.8e-03 & 5.1e-04 (5.56) & 5.1e-05 (9.09) & 7.1e-06 (7.05) & 8.5e-07 (5.54) \\
  17 & 6.3e-03 & 9.9e-04 (5.12) & 1.2e-04 (8.23) & 1.7e-05 (7.17) & 2.0e-06 (5.49) \\
  17 & 9.4e-03 & 1.0e-03 (6.13) & 1.3e-04 (8.24) & 1.7e-05 (7.17) & 2.1e-06 (5.50) \\
\hline
$P$ &\multicolumn{1}{c}{50} &\multicolumn{1}{c}{100}&\multicolumn{1}{c}{200} &\multicolumn{1}{c}{400} &\multicolumn{1}{c}{800} \\
\hline
\end{tabular}
\end{table}

\begin{table}
\caption{ First 10 eigenvalues on $\mathcal{V}$ with $k=4$ }
\label{tb:vk4}
\begin{tabular}{r|rrrrr}
\hline
Exact&\multicolumn{5}{c}{Errors (rate)}\\[3pt]
\hline
   2 & 6.6e-09 & 2.7e-10 (8.93) & 1.5e-11 (11.34) & 1.2e-12 (9.15) & 1.2e-13 (5.89) \\
   5 & 1.9e-07 & 1.4e-08 (7.23) & 5.1e-10 (13.14) & 6.4e-11 (7.38) & 2.7e-12 (8.30) \\
   5 & 6.8e-07 & 1.7e-08 (10.16) & 1.0e-09 (11.19) & 7.0e-11 (9.54) & 4.1e-12 (7.41) \\
   8 & 4.7e-06 & 2.1e-07 (8.67) & 8.7e-09 (12.52) & 7.7e-10 (8.62) & 4.6e-11 (7.37) \\
  10 & 5.9e-06 & 1.8e-07 (9.69) & 1.9e-08 (9.01) & 1.4e-09 (9.11) & 8.1e-11 (7.52) \\
  10 & 1.3e-05 & 4.4e-07 (9.42) & 2.4e-08 (11.57) & 2.0e-09 (8.73) & 1.1e-10 (7.71) \\
  13 & 2.5e-05 & 1.7e-06 (7.40) & 7.6e-08 (12.27) & 5.7e-09 (9.22) & 4.2e-10 (6.83) \\
  13 & 7.5e-05 & 2.6e-06 (9.34) & 1.2e-07 (12.17) & 1.1e-08 (8.62) & 5.0e-10 (7.99) \\
  17 & 8.1e-05 & 2.7e-06 (9.42) & 2.6e-07 (9.21) & 2.1e-08 (9.03) & 9.1e-10 (8.21) \\
  17 & 1.1e-04 & 4.9e-06 (8.74) & 3.2e-07 (10.79) & 2.3e-08 (9.38) & 1.2e-09 (7.65) \\
\hline
$P$ &\multicolumn{1}{c}{50} &\multicolumn{1}{c}{100}&\multicolumn{1}{c}{200} &\multicolumn{1}{c}{400} &\multicolumn{1}{c}{800} \\
\hline
\end{tabular}
\end{table}

\subsection{Hexagonal meshes}
\label{sec:hex}

\begin{table}
\begin{center}
\caption{Dimension of the kernel of matrix $\m{B}$ for hexagonal mesh $\mathcal{H}$}
\label{tb:kernelH}
\begin{tabular}{r|r r| r r| r r| r r}
$\mathcal{H}$ &\multicolumn{6}{c}{ $\dim(\Kb)$ ($\dim(V_h)$)}\\[2pt]
\hline
\texttt{n x m}& \multicolumn{2}{c}{$k=1$} & \multicolumn{2}{c}{$k=2$} &
\multicolumn{2}{c}{$k=3$}& \multicolumn{2}{c}{$k=4$}\\[2pt]
\hline
 \texttt{ 8 x 10} & 0  & (150)     & 0  & (487)     & 8    & (918)          &
53    & (1443)\\
 \texttt{18 x 20} & 0  & (700)     & 0  & (2177)   & 187    & (4043)    & 498
& (6298)\\
 \texttt{26 x 30} & 0  & (1530)   & 0  & (4703)   & 549  & (8698)      & 1257 & (13515)\\
 \texttt{34 x 40} & 0  & (2680)   & 0  & (8189)   & 1016  & (15113)  & 2328  & (23452)\\
 \texttt{44 x 50} & 0  & (4350)   & 0  & (13239) & 1796  & (24398)  & 3974 & (37827)\\
 \texttt{52 x 60} & 0  & (6180)   & 0  & (18765) & 2693  & (34553)  &  5764
& (53544)\\
 \texttt{60 x 70} & 0  & (8330)   & 1  & (25251) & 3670  & (46468)         & 7912
& (71981)\\
 \texttt{70 x 80} & 0  & (11120)  &1  & (33661)  & 5036  & (61913)         & 10670
& (95876)\\[2pt]
\hline
\end{tabular}
\end{center}
\end{table}

Table~\ref{tb:kernelH} displays the dimensions of $\Kb$ and $V_h$ for regular
hexagonal meshes. We observe that for $k=1$ we have again that the kernel is
always reduced to $\{0\}$, for $k=2$ the kernel contains one element for the
two finest meshes. For $k=3,4$ we have a situation similar to the case of
Voronoi meshes, that is the dimension of $\Kb$ is positive and the first ten
computed eigenvalues converge optimally, see Tables~\ref{tb:hk1}-\ref{tb:hk4}.
We can see that for $k=4$ the rate of convergence is highly oscillating. This
is due to the fact that in this case the error is close to machine precision.
\begin{table}
\caption{ First 10 eigenvalues on $\mathcal{H}$ with $k=1$ }
\label{tb:hk1}
\begin{tabular}{r|rrrrr}
\hline
Exact&\multicolumn{5}{c}{Errors (rate)}\\[3pt]
\hline
   2 & 2.2e-02 & 4.9e-03 (2.15) & 1.3e-03 (1.91) & 6.0e-04 (1.91) & 3.0e-04 (2.41) \\
   5 & 1.2e-01 & 2.9e-02 (2.11) & 7.8e-03 (1.88) & 3.5e-03 (1.98) & 1.9e-03 (2.12) \\
   5 & 1.5e-01 & 3.2e-02 (2.28) & 8.2e-03 (1.95) & 3.5e-03 (2.10) & 2.0e-03 (1.95) \\
   8 & 3.5e-01 & 7.8e-02 (2.18) & 2.0e-02 (1.93) & 8.9e-03 (2.05) & 5.0e-03 (2.00) \\
  10 & 4.8e-01 & 1.1e-01 (2.10) & 3.1e-02 (1.87) & 1.4e-02 (1.97) & 7.6e-03 (2.10) \\
  10 & 6.5e-01 & 1.3e-01 (2.33) & 3.3e-02 (1.97) & 1.4e-02 (2.13) & 7.9e-03 (1.96) \\
  13 & 8.8e-01 & 2.0e-01 (2.14) & 5.3e-02 (1.91) & 2.3e-02 (2.02) & 1.3e-02 (2.04) \\
  13 & 1.0e+00 & 2.1e-01 (2.25) & 5.5e-02 (1.96) & 2.4e-02 (2.08) & 1.3e-02 (2.00) \\
  17 & 1.4e+00 & 3.2e-01 (2.11) & 8.9e-02 (1.87) & 4.0e-02 (1.97) & 2.2e-02 (2.10) \\
  17 & 2.0e+00 & 3.8e-01 (2.38) & 9.6e-02 (1.97) & 4.0e-02 (2.14) & 2.3e-02 (1.97) \\
\hline
\texttt{m} &\multicolumn{1}{c}{10} &\multicolumn{1}{c}{20}&\multicolumn{1}{c}{40} &\multicolumn{1}{c}{60} &\multicolumn{1}{c}{80} \\
\hline
\end{tabular}
\end{table}
\begin{table}
\caption{ First 10 eigenvalues on $\mathcal{H}$ with $k=2$ }
\label{tb:hk2}
\begin{tabular}{r|rrrrr}
\hline
Exact&\multicolumn{5}{c}{Errors (rate)}\\[3pt]
\hline
   2 & 1.1e-04 & 4.9e-06 (4.46) & 3.9e-07 (3.66) & 7.3e-08 (4.14) & 2.2e-08 (4.09) \\
   5 & 1.1e-03 & 6.4e-05 (4.12) & 4.7e-06 (3.78) & 9.1e-07 (4.04) & 2.9e-07 (4.02) \\
   5 & 1.9e-03 & 9.4e-05 (4.34) & 7.2e-06 (3.71) & 1.4e-06 (4.11) & 4.2e-07 (4.08) \\
   8 & 6.7e-03 & 3.1e-04 (4.42) & 2.5e-05 (3.65) & 4.6e-06 (4.13) & 1.4e-06 (4.09) \\
  10 & 8.9e-03 & 6.1e-04 (3.87) & 4.2e-05 (3.86) & 8.3e-06 (3.98) & 2.6e-06 (3.99) \\
  10 & 1.3e-02 & 6.8e-04 (4.22) & 5.1e-05 (3.74) & 9.8e-06 (4.09) & 3.0e-06 (4.06) \\
  13 & 2.0e-02 & 1.1e-03 (4.24) & 8.4e-05 (3.68) & 1.6e-05 (4.09) & 5.0e-06 (4.06) \\
  13 & 3.3e-02 & 1.6e-03 (4.35) & 1.3e-04 (3.67) & 2.4e-05 (4.12) & 7.3e-06 (4.09) \\
  17 & 4.4e-02 & 3.1e-03 (3.86) & 2.2e-04 (3.77) & 4.4e-05 (4.03) & 1.4e-05 (4.05) \\
  17 & 5.2e-02 & 3.3e-03 (3.98) & 2.3e-04 (3.86) & 4.5e-05 (4.00) & 1.4e-05 (3.97) \\
\hline
\texttt{m} &\multicolumn{1}{c}{10} &\multicolumn{1}{c}{20}&\multicolumn{1}{c}{40} &\multicolumn{1}{c}{60} &\multicolumn{1}{c}{80} \\
\hline
\end{tabular}
\end{table}
\begin{table}
\caption{ First 10 eigenvalues on $\mathcal{H}$ with $k=3$ }
\label{tb:hk3}
\begin{tabular}{r|rrrrr}
\hline
Exact&\multicolumn{5}{c}{Errors (rate)}\\[3pt]
\hline
   2 & 2.4e-07 & 3.3e-09 (6.21) & 6.0e-11 (5.78) & 5.2e-12 (6.04) & 1.0e-12 (5.73) \\
   5 & 1.0e-05 & 9.8e-08 (6.74) & 2.1e-09 (5.57) & 1.7e-10 (6.21) & 2.8e-11 (6.13) \\
   5 & 1.2e-05 & 1.5e-07 (6.22) & 2.8e-09 (5.79) & 2.4e-10 (6.06) & 4.2e-11 (6.04) \\
   8 & 6.0e-05 & 8.3e-07 (6.16) & 1.5e-08 (5.77) & 1.3e-09 (6.05) & 2.3e-10 (6.03) \\
  10 & 1.6e-04 & 1.8e-06 (6.55) & 3.8e-08 (5.52) & 3.0e-09 (6.25) & 5.2e-10 (6.18) \\
  10 & 2.0e-04 & 2.3e-06 (6.47) & 4.1e-08 (5.81) & 3.5e-09 (6.06) & 6.1e-10 (6.04) \\
  13 & 3.8e-04 & 4.5e-06 (6.41) & 9.0e-08 (5.65) & 7.5e-09 (6.13) & 1.3e-09 (6.09) \\
  13 & 4.9e-04 & 6.9e-06 (6.15) & 1.3e-07 (5.78) & 1.1e-08 (6.05) & 1.9e-09 (6.04) \\
  17 & 1.2e-03 & 1.6e-05 (6.19) & 3.1e-07 (5.70) & 2.7e-08 (6.05) & 4.7e-09 (6.04) \\
  17 & 1.8e-03 & 1.8e-05 (6.68) & 3.5e-07 (5.64) & 2.8e-08 (6.25) & 4.7e-09 (6.18) \\
\hline
\texttt{m} &\multicolumn{1}{c}{10} &\multicolumn{1}{c}{20}&\multicolumn{1}{c}{40} &\multicolumn{1}{c}{60} &\multicolumn{1}{c}{80} \\
\hline
\end{tabular}
\end{table}
\begin{table}
\caption{ First 10 eigenvalues on $\mathcal{H}$ with $k=4$ }
\label{tb:hk4}
\begin{tabular}{r|rrrrr}
\hline
Exact&\multicolumn{5}{c}{Errors (rate)}\\[3pt]
\hline
   2 & 3.8e-10 & 6.0e-13 (9.30) & 2.7e-13 (1.15) & 8.1e-14 (2.99) & 6.4e-13 (-7.20) \\
   5 & 7.3e-08 & 1.8e-10 (8.63) & 1.2e-12 (7.23) & 1.3e-12 (-0.14) & 1.0e-13 (8.87) \\
   5 & 1.8e-08 & 4.8e-11 (8.53) & 4.2e-13 (6.82) & 4.6e-13 (-0.21) & 6.8e-13 (-1.37) \\
   8 & 3.5e-07 & 6.5e-10 (9.05) & 4.5e-12 (7.19) & 8.5e-13 (4.09) & 7.2e-13 (0.58) \\
  10 & 1.1e-06 & 3.7e-09 (8.19) & 2.1e-11 (7.48) & 1.3e-12 (6.88) & 1.1e-13 (8.42) \\
  10 & 6.1e-09 & 1.1e-09 (2.46) & 4.7e-12 (7.88) & 7.1e-15 (16.02) & 1.3e-12 (-18.20) \\
  13 & 8.3e-06 & 2.1e-08 (8.64) & 1.3e-10 (7.36) & 4.5e-12 (8.22) & 3.7e-14 (16.68) \\
  13 & 1.7e-06 & 6.1e-09 (8.15) & 2.1e-11 (8.16) & 3.2e-13 (10.38) & 9.2e-13 (-3.70) \\
  17 & 6.2e-06 & 4.3e-08 (7.15) & 1.9e-10 (7.81) & 8.4e-12 (7.72) & 1.5e-13 (14.02) \\
  17 & 3.1e-06 & 2.8e-08 (6.82) & 1.5e-10 (7.53) & 5.9e-12 (7.99) & 5.1e-13 (8.49) \\
\hline
\texttt{m} &\multicolumn{1}{c}{10} &\multicolumn{1}{c}{20}&\multicolumn{1}{c}{40} &\multicolumn{1}{c}{60} &\multicolumn{1}{c}{80} \\
\hline
\end{tabular}
\end{table}

\subsection{Dyadic meshes}
\label{sec:dyadic}

We conclude by considering a less standard mesh, called \emph{dyadic} and labeled by $\mathcal{D}$, obtained by dividing the unit square into $N^2$ squares. Each square is considered as a 
polygon with 8 edges by adding the midpont to each edge. Therefore the number of vertices is $(3N+1)(N+1)$. The mesh corresponding to 
$N=4$ is depicted in Figure~\ref{fig:dyadic}, where we highlighted the vertices and the edges.
\begin{figure}
\begin{center}
\includegraphics[width=0.22\textwidth]{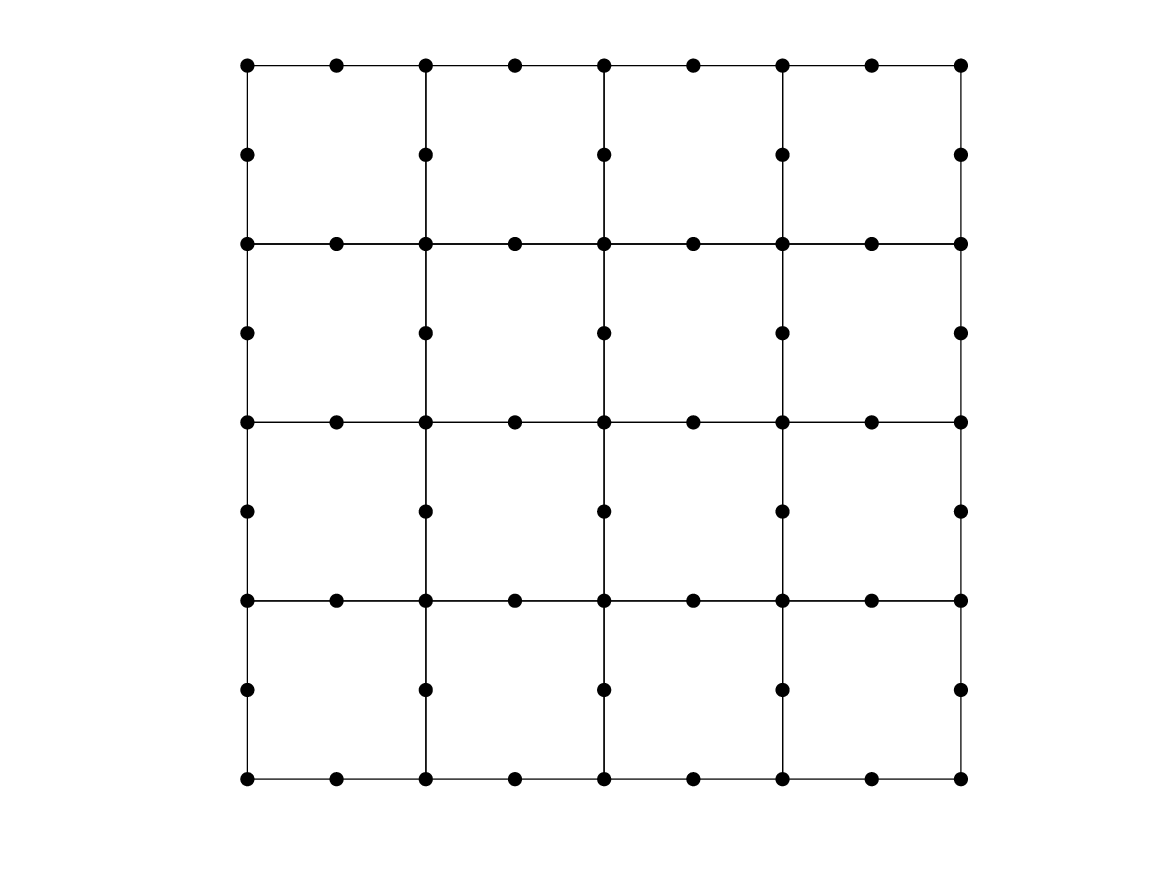}   
\end{center}
\caption{Coarsest dyadic mesh $\mathcal{D}$}
\label{fig:dyadic}
\end{figure} 
Table~\ref{tb:kernelD} displays the dimensions of $\Kb$ and of $V_h$
corresponding to different values of $N$ ranging from $4$ to $64$, 
and of the degree $k$ from $1$ to $4$. In this case we have that the kernel is
not trivial for all the degrees of the polynomials contained in the VEM space,
however we have no spurious eigenvalues. This is predicted by our theory for
the cases $k=1,2$, while for $k=3,4$ we have only numerical evidence. Moreover,
the rate of convergence is always optimal, as it can be seen in
Tables~\ref{tb:dk1}-\ref{tb:dk4}. As in the case of hexagonal meshes, the rate
of convergence for $k=4$ is oscillating due to the fact that the errors are
close to machine precision.
\begin{table}
\begin{center}
\caption{Dimension of the kernel of matrix $\m{B}$ for dyadic mesh
$\mathcal{D}$}
\label{tb:kernelD}
\begin{tabular}{r|r r| r r| r r| r r}
$\mathcal{D}$ &\multicolumn{6}{c}{ $\dim(\Kb)$ ($\dim(V_h)$)}\\[2pt]
\hline
N& \multicolumn{2}{c}{$k=1$} & \multicolumn{2}{c}{$k=2$} &  
\multicolumn{2}{c}{$k=3$}& \multicolumn{2}{c}{$k=4$}\\[2pt]
\hline
 4 &  9 & (33)        & 42 & (97)    &  66 & (177) & 90 & (273) \\
 8 & 49 & (161)      & 210 & (449)  & 322 & (801) & 434 & (1217)\\
16 & 225 & (705)    & 930 & (1921)  & 1410 & (3393) & 1890 & (5121)\\
32 & 961 & (2945)   & 3906 & (7937)  & 5890 & (13953) & 7874 & (20993)\\
64 & 3969 & (12033)  & 16002 & (32257) & 24066 & (56577) & 32131 & (84993)\\
\hline
\end{tabular}
\end{center}
\end{table}
\begin{table}
\caption{ First 10 eigenvalues on $\mathcal{D}$ with $k=1$ }
\label{tb:dk1}
\begin{tabular}{r|rrrrr}
\hline
Exact&\multicolumn{5}{c}{Errors (rate)}\\[3pt]
\hline
   2 & 1.7e-01 & 3.9e-02 (2.08) & 9.7e-03 (2.02) & 2.4e-03 (2.01) & 6.0e-04 (2.00) \\
   5 & 9.3e-01 & 2.0e-01 (2.23) & 4.7e-02 (2.08) & 1.1e-02 (2.02) & 2.9e-03 (1.99) \\
   5 & 9.3e-01 & 2.0e-01 (2.23) & 4.7e-02 (2.08) & 1.1e-02 (2.02) & 2.9e-03 (1.99) \\
   8 & 3.0e+00 & 6.6e-01 (2.19) & 1.6e-01 (2.08) & 3.9e-02 (2.02) & 9.7e-03 (2.00) \\
  10 & 3.5e+00 & 6.8e-01 (2.36) & 1.5e-01 (2.21) & 3.5e-02 (2.07) & 8.7e-03 (2.01) \\
  10 & 3.5e+00 & 6.8e-01 (2.36) & 1.5e-01 (2.21) & 3.5e-02 (2.07) & 8.7e-03 (2.01) \\
  13 & 7.6e+00 & 1.7e+00 (2.17) & 3.8e-01 (2.14) & 9.3e-02 (2.04) & 2.3e-02 (2.01) \\
  13 & 7.6e+00 & 1.7e+00 (2.17) & 3.8e-01 (2.14) & 9.3e-02 (2.04) & 2.3e-02 (2.01) \\
  17 & 1.5e+01 & 1.9e+00 (2.94) & 3.8e-01 (2.35) & 8.7e-02 (2.13) & 2.1e-02 (2.04) \\
  17 & 2.4e+01 & 1.9e+00 (3.61) & 3.8e-01 (2.35) & 8.7e-02 (2.13) & 2.1e-02 (2.04) \\
\hline
$N$ &\multicolumn{1}{c}{4} &\multicolumn{1}{c}{8}&\multicolumn{1}{c}{16} &\multicolumn{1}{c}{32} &\multicolumn{1}{c}{64} \\
\hline
\end{tabular}
\end{table}
\begin{table}
\caption{ First 10 eigenvalues on $\mathcal{D}$ with $k=2$ }
\label{tb:dk2}
\begin{tabular}{r|rrrrr}
\hline
Exact&\multicolumn{5}{c}{Errors (rate)}\\[3pt]
\hline
   2 & 8.6e-04 & 4.8e-05 (4.16) & 2.9e-06 (4.04) & 1.8e-07 (4.01) & 1.1e-08 (4.00) \\
   5 & 3.3e-02 & 2.0e-03 (4.05) & 1.2e-04 (4.01) & 7.6e-06 (4.00) & 4.8e-07 (4.00) \\
   5 & 3.3e-02 & 2.0e-03 (4.05) & 1.2e-04 (4.01) & 7.6e-06 (4.00) & 4.8e-07 (4.00) \\
   8 & 7.8e-02 & 3.4e-03 (4.50) & 1.9e-04 (4.16) & 1.2e-05 (4.04) & 7.3e-07 (4.01) \\
  10 & 3.3e-01 & 2.3e-02 (3.89) & 1.4e-03 (3.97) & 9.1e-05 (3.99) & 5.7e-06 (4.00) \\
  10 & 3.3e-01 & 2.3e-02 (3.89) & 1.4e-03 (3.97) & 9.1e-05 (3.99) & 5.7e-06 (4.00) \\
  13 & 4.8e-01 & 2.4e-02 (4.32) & 1.4e-03 (4.12) & 8.5e-05 (4.03) & 5.3e-06 (4.01) \\
  13 & 4.8e-01 & 2.4e-02 (4.32) & 1.4e-03 (4.12) & 8.5e-05 (4.03) & 5.3e-06 (4.01) \\
  17 & 5.0e-01 & 1.2e-01 (2.03) & 8.1e-03 (3.92) & 5.2e-04 (3.98) & 3.2e-05 (3.99) \\
  17 & 5.0e-01 & 1.2e-01 (2.03) & 8.1e-03 (3.92) & 5.2e-04 (3.98) & 3.2e-05 (3.99) \\
\hline
$N$ &\multicolumn{1}{c}{4} &\multicolumn{1}{c}{8}&\multicolumn{1}{c}{16} &\multicolumn{1}{c}{32} &\multicolumn{1}{c}{64} \\
\hline
\end{tabular}
\end{table}
\begin{table}
\caption{ First 10 eigenvalues on $\mathcal{D}$ with $k=3$ }
\label{tb:dk3}
\begin{tabular}{r|rrrrr}
\hline
Exact&\multicolumn{5}{c}{Errors (rate)}\\[3pt]
\hline
   2 & 8.1e-05 & 1.3e-06 (5.97) & 2.0e-08 (5.99) & 3.2e-10 (6.00) & 3.9e-12 (6.35) \\
   5 & 1.8e-03 & 3.3e-05 (5.80) & 5.3e-07 (5.96) & 8.4e-09 (5.99) & 1.3e-10 (6.02) \\
   5 & 1.8e-03 & 3.3e-05 (5.80) & 5.3e-07 (5.96) & 8.4e-09 (5.99) & 1.3e-10 (6.00) \\
   8 & 1.8e-02 & 3.2e-04 (5.79) & 5.2e-06 (5.97) & 8.1e-08 (5.99) & 1.3e-09 (6.00) \\
  10 & 1.8e-02 & 3.8e-04 (5.53) & 6.5e-06 (5.89) & 1.0e-07 (5.97) & 1.6e-09 (5.99) \\
  10 & 1.8e-02 & 3.8e-04 (5.53) & 6.5e-06 (5.89) & 1.0e-07 (5.97) & 1.6e-09 (5.99) \\
  13 & 8.3e-02 & 1.8e-03 (5.51) & 3.0e-05 (5.91) & 4.8e-07 (5.98) & 7.5e-09 (6.00) \\
  13 & 8.3e-02 & 1.8e-03 (5.51) & 3.0e-05 (5.91) & 4.8e-07 (5.98) & 7.5e-09 (6.00) \\
  17 & 2.2e-01 & 2.8e-03 (6.31) & 4.9e-05 (5.82) & 7.8e-07 (5.96) & 1.2e-08 (5.99) \\
  17 & 2.2e-01 & 2.8e-03 (6.31) & 4.9e-05 (5.82) & 7.8e-07 (5.96) & 1.2e-08 (5.99) \\
\hline
$N$ &\multicolumn{1}{c}{4} &\multicolumn{1}{c}{8}&\multicolumn{1}{c}{16} &\multicolumn{1}{c}{32} &\multicolumn{1}{c}{64} \\
\hline
\end{tabular}
\end{table}
\begin{table}
\caption{ First 10 eigenvalues on $\mathcal{D}$ with $k=4$ }
\label{tb:dk4}
\begin{tabular}{r|rrrrr}
\hline
Exact&\multicolumn{5}{c}{Errors (rate)}\\[3pt]
\hline
   2 & 6.9e-07 & 2.9e-09 (7.87) & 1.1e-11 (8.02) & 2.8e-12 (2.05) & 2.5e-12 (0.11) \\
   5 & 4.8e-05 & 2.2e-07 (7.78) & 8.9e-10 (7.94) & 5.6e-13 (10.64) & 1.1e-12 (-1.02) \\
   5 & 4.8e-05 & 2.2e-07 (7.78) & 8.9e-10 (7.94) & 1.9e-12 (8.84) & 5.4e-12 (-1.47) \\
   8 & 5.1e-04 & 2.8e-06 (7.54) & 1.2e-08 (7.87) & 4.4e-11 (8.06) & 6.1e-12 (2.87) \\
  10 & 1.0e-03 & 4.9e-06 (7.70) & 2.0e-08 (7.92) & 7.8e-11 (8.04) & 7.3e-13 (6.73) \\
  10 & 1.0e-03 & 4.9e-06 (7.70) & 2.0e-08 (7.92) & 7.9e-11 (8.02) & 7.3e-12 (3.43) \\
  13 & 4.6e-03 & 2.7e-05 (7.40) & 1.2e-07 (7.82) & 4.8e-10 (7.96) & 4.3e-12 (6.82) \\
  13 & 4.6e-03 & 2.7e-05 (7.40) & 1.2e-07 (7.82) & 4.8e-10 (7.96) & 1.4e-11 (5.12) \\
  17 & 1.2e-02 & 5.4e-05 (7.82) & 2.3e-07 (7.89) & 9.0e-10 (7.98) & 3.1e-12 (8.18) \\
  17 & 1.2e-02 & 5.4e-05 (7.82) & 2.3e-07 (7.89) & 9.0e-10 (7.98) & 8.0e-12 (6.81) \\
\hline
$N$ &\multicolumn{1}{c}{4} &\multicolumn{1}{c}{8}&\multicolumn{1}{c}{16} &\multicolumn{1}{c}{32} &\multicolumn{1}{c}{64} \\
\hline
\end{tabular}
\end{table}

\section*{Acknowledgments}

The authors are member of INdAM Research group GNCS.

\bibliography{ref.bib}

@article {DNR1,
    AUTHOR = {Descloux, Jean and Nassif, Nabil and Rappaz, Jacques},
     TITLE = {On spectral approximation. {I}. {T}he problem of convergence},
   JOURNAL = {RAIRO Anal. Num\'er.},
  FJOURNAL = {RAIRO Analyse Num\'erique},
    VOLUME = {12},
      YEAR = {1978},
    NUMBER = {2},
     PAGES = {97--112, iii},
      ISSN = {0399-0516,0516-2777},
   MRCLASS = {65J05},
  MRNUMBER = {483400},
MRREVIEWER = {Juhani\ Pitk\"aranta},
       DOI = {10.1051/m2an/1978120200971},
       URL = {https://doi.org/10.1051/m2an/1978120200971},
}

@article {DNR2,
    AUTHOR = {Descloux, Jean and Nassif, Nabil and Rappaz, Jacques},
     TITLE = {On spectral approximation. {II}. {E}rror estimates for the
              {G}alerkin method},
   JOURNAL = {RAIRO Anal. Num\'er.},
  FJOURNAL = {RAIRO Analyse Num\'erique},
    VOLUME = {12},
      YEAR = {1978},
    NUMBER = {2},
     PAGES = {113--119, iii},
      ISSN = {0399-0516,0516-2777},
   MRCLASS = {65J05},
  MRNUMBER = {483401},
MRREVIEWER = {Juhani\ Pitk\"aranta},
       DOI = {10.1051/m2an/1978120201131},
       URL = {https://doi.org/10.1051/m2an/1978120201131},
}

@article {auto-stab,
    AUTHOR = {Boffi, Daniele and Gardini, Francesca and Gastaldi, Lucia},
     TITLE = {Approximation of {PDE} eigenvalue problems involving parameter
              dependent matrices},
   JOURNAL = {Calcolo},
  FJOURNAL = {Calcolo. A Quarterly on Numerical Analysis and Theory of
              Computation},
    VOLUME = {57},
      YEAR = {2020},
    NUMBER = {4},
     PAGES = {Paper No. 41, 21},
      ISSN = {0008-0624,1126-5434},
   MRCLASS = {65N30 (65N25)},
  MRNUMBER = {4177014},
MRREVIEWER = {Mohammad\ Asadzadeh},
       DOI = {10.1007/s10092-020-00390-6},
       URL = {https://doi.org/10.1007/s10092-020-00390-6},
}

@article {Fra-VEM,
    AUTHOR = {Gardini, Francesca and Vacca, Giuseppe},
     TITLE = {Virtual element method for second-order elliptic eigenvalue
              problems},
   JOURNAL = {IMA J. Numer. Anal.},
  FJOURNAL = {IMA Journal of Numerical Analysis},
    VOLUME = {38},
      YEAR = {2018},
    NUMBER = {4},
     PAGES = {2026--2054},
      ISSN = {0272-4979,1464-3642},
   MRCLASS = {65N25 (65N30)},
  MRNUMBER = {3867390},
       DOI = {10.1093/imanum/drx063},
       URL = {https://doi.org/10.1093/imanum/drx063},
}

@article {enhanced,
    AUTHOR = {Ahmad, B. and Alsaedi, A. and Brezzi, F. and Marini, L. D. and
              Russo, A.},
     TITLE = {Equivalent projectors for virtual element methods},
   JOURNAL = {Comput. Math. Appl.},
  FJOURNAL = {Computers \& Mathematics with Applications. An International
              Journal},
    VOLUME = {66},
      YEAR = {2013},
    NUMBER = {3},
     PAGES = {376--391},
      ISSN = {0898-1221},
   MRCLASS = {65N30 (65N12)},
  MRNUMBER = {3073346},
MRREVIEWER = {Francesco Calabr\`o},
       DOI = {10.1016/j.camwa.2013.05.015},
}

@article {basic,
    AUTHOR = {Beir\~ao da Veiga, L. and Brezzi, F. and Cangiani, A. and
              Manzini, G. and Marini, L. D. and Russo, A.},
     TITLE = {Basic principles of virtual element methods},
   JOURNAL = {Math. Models Methods Appl. Sci.},
  FJOURNAL = {Mathematical Models and Methods in Applied Sciences},
    VOLUME = {23},
      YEAR = {2013},
    NUMBER = {1},
     PAGES = {199--214},
      ISSN = {0218-2025,1793-6314},
   MRCLASS = {65N06},
  MRNUMBER = {2997471},
MRREVIEWER = {Bo\v sko\ S.\ Jovanovi\'c},
       DOI = {10.1142/S0218202512500492},
       URL = {https://doi.org/10.1142/S0218202512500492},
}

@article {Acta-VEM,
    AUTHOR = {Beir\~ao da Veiga, Louren\c{c}o and Brezzi, Franco and Marini,
              L. Donatella and Russo, Alessandro},
     TITLE = {The virtual element method},
   JOURNAL = {Acta Numer.},
  FJOURNAL = {Acta Numerica},
    VOLUME = {32},
      YEAR = {2023},
     PAGES = {123--202},
      ISSN = {0962-4929,1474-0508},
   MRCLASS = {65N30},
  MRNUMBER = {4586821},
MRREVIEWER = {Feng\ Wang},
       DOI = {10.1017/S0962492922000095},
       URL = {https://doi.org/10.1017/S0962492922000095},
}

@article{berrone2023stabilizationfree,
    AUTHOR = {Berrone, Stefano and Borio, Andrea and Marcon, Francesca},
     TITLE = {Lowest order stabilization free virtual element method for the
              2{D} {P}oisson equation},
   JOURNAL = {Comput. Math. Appl.},
  FJOURNAL = {Computers \& Mathematics with Applications. An International
              Journal},
    VOLUME = {177},
      YEAR = {2025},
     PAGES = {78--99},
      ISSN = {0898-1221,1873-7668},
   MRCLASS = {65N12 (65N15 65N30)},
  MRNUMBER = {4828369},
MRREVIEWER = {Nasser\ H.\ Sweilam},
       DOI = {10.1016/j.camwa.2024.11.017},
       URL = {https://doi.org/10.1016/j.camwa.2024.11.017},
}

@article {Mengetal,
    AUTHOR = {Meng, Jian and Wang, Xue and Bu, Linlin and Mei, Liquan},
     TITLE = {A lowest-order free-stabilization virtual element method for
              the {L}aplacian eigenvalue problem},
   JOURNAL = {J. Comput. Appl. Math.},
  FJOURNAL = {Journal of Computational and Applied Mathematics},
    VOLUME = {410},
      YEAR = {2022},
     PAGES = {Paper No. 114013, 11},
      ISSN = {0377-0427,1879-1778},
   MRCLASS = {65N25 (65N15 65N30)},
  MRNUMBER = {4386637},
MRREVIEWER = {Anh-Khoa\ Vo},
       DOI = {10.1016/j.cam.2021.114013},
       URL = {https://doi.org/10.1016/j.cam.2021.114013},
}

@misc{CAVE,
  author = {{Team at University of Milano-Bicocca}},
  title = {Virtual Element @ Bicocca},
  howpublished = { \url{https://https://sites.google.com/view/vembic/home}},
  year = {2016},
}

@article {Boffi-acta,
AUTHOR = {Boffi, Daniele},
TITLE = {Finite element approximation of eigenvalue problems},
JOURNAL = {Acta Numer.},
FJOURNAL = {Acta Numerica},
VOLUME = {19},
YEAR = {2010},
PAGES = {1--120},
ISSN = {0962-4929,1474-0508},
MRCLASS = {65N30 (65N25)},
MRNUMBER = {2652780},
MRREVIEWER = {Srinivasan\ Kesavan},
DOI = {10.1017/S0962492910000012},
URL = {https://doi.org/10.1017/S0962492910000012},
}

@incollection {vem-springer,
    AUTHOR = {Boffi, Daniele and Gardini, Francesca and Gastaldi, Lucia},
     TITLE = {Virtual element approximation of eigenvalue problems},
 BOOKTITLE = {The virtual element method and its applications},
    SERIES = {SEMA SIMAI Springer Ser.},
    VOLUME = {31},
     PAGES = {275--320},
 PUBLISHER = {Springer, Cham},
      YEAR = {[2022] \copyright 2022},
      ISBN = {978-3-030-95318-8; 978-3-030-95319-5},
   MRCLASS = {65N21 (65N30)},
  MRNUMBER = {4510905},
       DOI = {10.1007/978-3-030-95319-5\_7},
       URL = {https://doi.org/10.1007/978-3-030-95319-5_7},
}

@article {fra_nonconf,
    AUTHOR = {Gardini, Francesca and Manzini, Gianmarco and Vacca, Giuseppe},
     TITLE = {The nonconforming virtual element method for eigenvalue
              problems},
   JOURNAL = {ESAIM Math. Model. Numer. Anal.},
  FJOURNAL = {ESAIM. Mathematical Modelling and Numerical Analysis},
    VOLUME = {53},
      YEAR = {2019},
    NUMBER = {3},
     PAGES = {749--774},
      ISSN = {2822-7840,2804-7214},
   MRCLASS = {65N30 (65N25)},
  MRNUMBER = {3959470},
MRREVIEWER = {Marco\ Picasso},
       DOI = {10.1051/m2an/2018074},
       URL = {https://doi.org/10.1051/m2an/2018074},
}

@article {mora_steklov,
    AUTHOR = {Mora, David and Rivera, Gonzalo and Rodr\'iguez, Rodolfo},
     TITLE = {A virtual element method for the {S}teklov eigenvalue problem},
   JOURNAL = {Math. Models Methods Appl. Sci.},
  FJOURNAL = {Mathematical Models and Methods in Applied Sciences},
    VOLUME = {25},
      YEAR = {2015},
    NUMBER = {8},
     PAGES = {1421--1445},
      ISSN = {0218-2025,1793-6314},
   MRCLASS = {65N25 (35J25 65N30)},
  MRNUMBER = {3340705},
MRREVIEWER = {Michael\ Karkulik},
       DOI = {10.1142/S0218202515500372},
       URL = {https://doi.org/10.1142/S0218202515500372},
}

@article {Berronefirst,
    AUTHOR = {Berrone, Stefano and Borio, Andrea and Marcon, Francesca and
              Teora, Gioana},
     TITLE = {A first-order stabilization-free virtual element method},
   JOURNAL = {Appl. Math. Lett.},
  FJOURNAL = {Applied Mathematics Letters. An International Journal of Rapid
              Publication},
    VOLUME = {142},
      YEAR = {2023},
     PAGES = {Paper No. 108641, 6},
      ISSN = {0893-9659,1873-5452},
   MRCLASS = {65M60},
  MRNUMBER = {4563532},
       DOI = {10.1016/j.aml.2023.108641},
       URL = {https://doi.org/10.1016/j.aml.2023.108641},
}

@article {self-stabilized,
    AUTHOR = {Lamperti, Andrea and Cremonesi, Massimiliano and Perego,
              Umberto and Russo, Alessandro and Lovadina, Carlo},
     TITLE = {A {H}u-{W}ashizu variational approach to self-stabilized
              virtual elements: 2{D} linear elastostatics},
   JOURNAL = {Comput. Mech.},
  FJOURNAL = {Computational Mechanics},
    VOLUME = {71},
      YEAR = {2023},
    NUMBER = {5},
     PAGES = {935--955},
      ISSN = {0178-7675,1432-0924},
   MRCLASS = {65M60 (74S05)},
  MRNUMBER = {4577379},
       DOI = {10.1007/s00466-023-02282-2},
       URL = {https://doi.org/10.1007/s00466-023-02282-2},
}

@article {Paola,
    AUTHOR = {Foligno, Paola Pia and Boffi, Daniele and Credali, Fabio and
              Vescovini, Riccardo},
     TITLE = {Benchmarking stabilized and self-stabilized $p$-virtual element
              methods with variable coefficients},
   JOURNAL = {Comput. Methods Appl. Mech. Engrg.},
    VOLUME = {},
      YEAR = {},
    NUMBER = {},
     PAGES = {},
      NOTE = {To appear},
}

@article {Linda,
    AUTHOR = {Alzaben, Linda and Boffi, Daniele and Dedner, Andreas and
              Gastaldi, Lucia},
     TITLE = {On the stabilization of a virtual element method for an
              acoustic vibration problem},
   JOURNAL = {Math. Models Methods Appl. Sci.},
  FJOURNAL = {Mathematical Models and Methods in Applied Sciences},
    VOLUME = {35},
      YEAR = {2025},
    NUMBER = {3},
     PAGES = {655--701},
      ISSN = {0218-2025,1793-6314},
   MRCLASS = {65N30 (65N25 70J30 76M10)},
  MRNUMBER = {4884682},
       DOI = {10.1142/S0218202525500071},
       URL = {https://doi.org/10.1142/S0218202525500071},
}

@article {ChenSukumar,
    AUTHOR = {Chen, Alvin and Sukumar, N.},
     TITLE = {Stabilization-free virtual element method for plane
              elasticity},
   JOURNAL = {Comput. Math. Appl.},
  FJOURNAL = {Computers \& Mathematics with Applications. An International
              Journal},
    VOLUME = {138},
      YEAR = {2023},
     PAGES = {88--105},
      ISSN = {0898-1221,1873-7668},
   MRCLASS = {74S05 (65N30 74Bxx)},
  MRNUMBER = {4561596},
MRREVIEWER = {Carlos\ Henrique\ Daros},
       DOI = {10.1016/j.camwa.2023.03.002},
       URL = {https://doi.org/10.1016/j.camwa.2023.03.002},
}

@article {MarconMora,
    AUTHOR = {Marcon, Francesca and Mora, David},
     TITLE = {A stabilization-free virtual element method for the
              convection-diffusion eigenproblem},
   JOURNAL = {J. Sci. Comput.},
  FJOURNAL = {Journal of Scientific Computing},
    VOLUME = {102},
      YEAR = {2025},
    NUMBER = {2},
     PAGES = {Paper No. 46, 33},
      ISSN = {0885-7474,1573-7691},
   MRCLASS = {65N15 (65N25 65N30)},
  MRNUMBER = {4846353},
MRREVIEWER = {Shuhao\ Cao},
       DOI = {10.1007/s10915-024-02765-1},
       URL = {https://doi.org/10.1007/s10915-024-02765-1},
}

@article {Mengetal2,
    AUTHOR = {Meng, Jian and Guan, Lei and Qian, Xu and Song, Songhe and
              Mei, Liquan},
     TITLE = {Stabilization-free virtual element method for the transmission
              eigenvalue problem on anisotropic media},
   JOURNAL = {J. Comput. Math.},
  FJOURNAL = {Journal of Computational Mathematics},
    VOLUME = {44},
      YEAR = {2026},
    NUMBER = {1},
     PAGES = {103--134},
      ISSN = {0254-9409,1991-7139},
   MRCLASS = {65N25 (65N30 74S05 92E10)},
  MRNUMBER = {4994616},
       DOI = {10.4208/jcm.2410-m2024-0023},
       URL = {https://doi.org/10.4208/jcm.2410-m2024-0023},
}
\bibliographystyle{plain}

\end{document}